\newtheorem{theorem}{Theorem}[section]
\newtheorem{lemma}[theorem]{Lemma}
\newtheorem{proposition}[theorem]{Proposition}
\newtheorem{corollary}[theorem]{Corollary}
\theoremstyle{definition}
\newtheorem{definition}[theorem]{Definition}
\newtheorem{example}[theorem]{Example}
\newtheorem{remark}[theorem]{Remark}
\newcommand{\End}{\text{End}}
\newcommand{\Hom}{\text{Hom}}
\newcommand{\Ind}{\text{Ind}}
\newcommand{\Res}{\text{Res}}
\newcommand{\diag}{\text{diag}}
\newcommand{\Span}{\text{Span}}
\newcommand{\sgn}{\text{sgn}}
\newcommand{\tr}{\text{tr}\,}
\newcommand{\kk}{\mathfrak{k}}
\newcommand{\kp}{\mathfrak{p}}
\newcommand{\ka}{\mathfrak{a}}
\newcommand{\kg}{\mathfrak{g}}
\def\HH{\hbox{${\mathcal H}$\kern-5.2pt${\mathcal H}$}}
\newcommand{\cellsize}{22}
\newlength{\cellsz} \setlength{\cellsz}{\cellsize\unitlength}
\newsavebox{\cell}
\sbox{\cell}{\begin{picture}(\cellsize,\cellsize)
\put(0,0){\line(1,0){\cellsize}}
\put(0,0){\line(0,1){\cellsize}}
\put(\cellsize,0){\line(0,1){\cellsize}}
\put(0,\cellsize){\line(1,0){\cellsize}}
\end{picture}}
\newcommand\cellify[1]{\def\thearg{#1}\def\nothing{}%
\ifx\thearg\nothing
\vrule width0pt height\cellsz depth0pt\else
\hbox to 0pt{\usebox{\cell} \hss}\fi%
\vbox to \cellsz{
\vss
\hbox to \cellsz{\hss$#1$\hss}
\vss}}
\newcommand\tableau[1]{\vtop{\let\\\cr
\baselineskip -16000pt \lineskiplimit 16000pt \lineskip 0pt
\ialign{&\cellify{##}\cr#1\crcr}}}
\begin{document}
\title[Representations of degenerate affine Hecke algebras of type $BC_{n}$]{On some representations of degenerate affine Hecke algebras of type $BC_{n}$}

\author[Xiaoguang Ma]{Xiaoguang Ma}

\address{Department of Mathematics,
Massachusetts Institute of Technology,
Cambridge, MA  02139, USA}
\email{xma@math.mit.edu}


\maketitle

\section{Introduction}
The degenerate affine Hecke algebra (dAHA) of any finite Coxeter group
was defined by Drinfeld and Lusztig(\cite{Dri},\cite{Lus}). It is
generated by the group algebra of the Coxeter group and by the
commuting generators $y_{i}$ with some relations.

In \cite{AS}, the authors give a Lie-theoretic construction of
representations of the dAHA of type $A_{n-1}$. They construct a functor from the BGG category of $\mathfrak{sl}_N$ to the category of finite dimensional representations of the dAHA of type $A_{n-1}$. They also describe the image of some modules, e.g. the Verma modules, under this functor. In \cite{CEE}, this construction is generalized from the BGG category to the category of $\mathfrak{sl}_N$-bimodules and is upgraded to a Lie-theoretic construction of
representations of degenerate double affine Hecke algebra (dDAHA) of type $A_{n-1}$. 

In \cite{EFM}, the authors generalize the Lie-theoretic constructions in \cite{AS} and \cite{CEE} to the type $BC_{n}$ root system. They construct a functor: 
\begin{eqnarray*}
F_{n,p,\mu}:&\text{ 
$\mathfrak{gl}_{N}$-modules} &\to \begin{array}{c}\text{finite dimensional representations of dAHA of } \\\text{type $BC_{n}$ with special parameters,}\end{array}\\
&M&\mapsto (M\otimes (\mathbb{C}^{N})^{\otimes n})^{\mathfrak{k}^{0},\mu},
\end{eqnarray*}
where $\mathfrak{k}^{0}$ is the subalgebra of $\mathfrak{gl}_{p}\times \mathfrak{gl}_{q}$ generated by trace zero matrices and $\mu\in \mathbb{C}$. They also upgrade this construction to the dDAHA of type $BC_{n}$.
 
In this paper, we first compute the dimension of the image of the principle series module $H_{\pi\otimes\nu}$ under the functor $F_{n,p,\mu}$. Then 
we construct a family of vectors such that they are common eigenvectors 
of the commutative generators $\{y_{i}\}$ of the type $BC_{n}$ dAHA. 
Then we prove the main result (Theorem \ref{main}) which descirbes
the dAHA module structure of $F_{n,p,\mu}(H_{\pi\otimes\nu})$. In the rank 1 case, we also write down an explicit formula for the generator $y_{1}$ in terms of central elements of $U(\mathfrak{g})$. We calculate the eigenvalue of $y_{1}$ by using the central characters.

The paper is organized as follows.  In Section 2, we recall some basic knowledge about principal series modules, representations of unitary groups, representations of symmetric groups and the Schur-Weyl duality. In Section 3, 
we recall some results in \cite{AS} and \cite{EFM}. In Section 4, we compute the dimension of the image of the functor $F_{n,p,\mu}$ for a principal series module of $U(p,q)$, construct a family of common 
eigenvectors for $\{y_{i}\}$ and prove the main result of the paper. 
In Section 5, we consider the case when the rank of type $BC_{n}$ dAHA is $1$. We use the central character of the principal series modules to compute the eigenvalue of $y_{1}$.

\section{Preliminaries}

\subsection{Principal series modules for real reductive Lie groups}

Let $G$ be a reductive Lie group and $K$ be its maximal compact subgroup. 
Let $\mathfrak{g}_{\mathbb{R}}$ and $\mathfrak{k}_{\mathbb{R}}$ be the real Lie algebras of $G$ and $K$, respectively. For $\mathfrak{g}_{\mathbb{R}}$, we have the Cartan decomposition 
$\mathfrak{g}_{\mathbb{R}}=\mathfrak{k}_{\mathbb{R}}\oplus\kp$.
Let $\ka\subset\kp$
be the maximal abelian algebra in $\kp$. Let $A$ be the corresponding connected Lie group. Denote by $M$ the centralizer of $A$ in $K$.
Under the adjoint action of $\ka$ on $\mathfrak{g}_{\mathbb{R}}$, we have a weight decomposition
$\mathfrak{g}_{\mathbb{R}} =\bigoplus_{\lambda}\kg(\lambda)$. The weight $\lambda$ such that $\mathfrak{g}(\lambda)\neq 0$ is called a restricted root of $\mathfrak{a}$ in $\mathfrak{g}$. Let $\Delta^{\mathrm{res}}$ be the set of all restricted roots.
Let $\mathfrak{n}_{+}=\oplus_{\lambda\in \Delta^{\mathrm{res}}_{+}}\mathfrak{g}(\lambda)$ and $N$ be the corresponding Lie group. We have the Iwasawa decomposition $G=K\times A\times N$.

For any element $\nu \in \ka^{*}_{\mathbb{C}}$, we define 
a character of $A$ by 
$$\nu (\exp X)=\exp \nu (X), \text{ for }X\in \ka.$$
We will denote $\nu(a)$ by $a^{\nu}$ for any $a\in A$.
Let $(\pi,W)$ be a finite dimensional irreducible representation of $M$. 
Define the space 
$$H_{\pi\otimes \nu}=\{f: G\to W|f \text{ is measurable}, f|_{K} \text{ is square integrable,}$$
$$\qquad\text{and for all}
\ m\in M, a\in A, n\in N, g\in G, 
f(gman)=a^{-(\nu+\rho)}\pi(m^{-1})f(g)\}.$$

We define the representation $\iota_{\pi\otimes \nu}$ of $G$ on 
$H_{\pi\otimes \nu}$ by 
$$\iota_{\pi\otimes\nu}(g)f(x)=f(g^{-1}x), g,x\in G.$$
This representation is called the principal series representation of $G$ with parameters $\pi$ and $\nu$. It is easy to see that:
$$\iota_{\pi\otimes\nu}=\Ind^{G}_{MAN}(\pi\otimes \nu\otimes 1).$$  

The principal series representations have the following nice properties which are used later.
\begin{proposition}[See \cite{Vog}]{\label{psmod}}
\begin{enumerate}
\item The principal series representation is an admissible representation of $G$. Every irreducible admissible representation of $G$ is infinitesimally equivalent to a composition factor of a principal series representation of $G$. 
\item The restriction of the principal series module to $K$ is the induced representation from the representation $\pi$ of $M$ to $K$, i.e. we have
$$\Res^{G}_{K}H_{\pi\otimes \nu}=\Ind^{K}_{M}W.$$ In particular, $\Res^{G}_{K}H_{\pi\otimes \nu}$ does not depend on $\nu$.
\end{enumerate}
\end{proposition}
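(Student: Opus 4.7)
The plan is to treat the two parts separately, since part (2) is essentially a book-keeping exercise with the Iwasawa decomposition while the hard half of part (1) is a genuine theorem.

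I would begin with part (2), because part (1) uses it. By the Iwasawa decomposition $G=KAN$, together with the fact that $M\subset K$ normalises $AN$, any $f\in H_{\pi\otimes\nu}$ is determined by its restriction $f|_K$, and the transformation rule $f(gman)=a^{-(\nu+\rho)}\pi(m^{-1})f(g)$ specialises on $K$ to $f(km)=\pi(m^{-1})f(k)$ for $k\in K$, $m\in M$. Conversely, given any measurable $W$-valued function $\phi$ on $K$ that is square-integrable and satisfies $\phi(km)=\pi(m^{-1})\phi(k)$, one defines $f$ on $G$ by $f(kan)=a^{-(\nu+\rho)}\phi(k)$; one checks well-definedness using the fact that $K\cap AN=\{e\}$ and the $M$-equivariance of $\phi$. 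The resulting bijection is manifestly $K$-equivariant, so $\Res^G_K H_{\pi\otimes\nu}\cong\mathrm{Ind}^K_M W$, and the right-hand side has no dependence on $\nu$.

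For the admissibility assertion in part (1), I would invoke Frobenius reciprocity for the compact pair $(K,M)$. Combined with part (2), the multiplicity of any irreducible $K$-type $\tau$ in $H_{\pi\otimes\nu}$ equals $\dim\Hom_M(\Res^K_M\tau,\pi)$, which is finite because $\tau$ is finite dimensional. Admissibility of $H_{\pi\otimes\nu}$ follows.

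The substantial content is the second sentence of (1): every irreducible admissible $G$-representation is infinitesimally equivalent to a composition factor of some principal series. This is the Casselman subrepresentation theorem, and it is not a formal consequence of the preceding observations. The strategy I would follow is the standard one: given an irreducible admissible $(\mathfrak{g},K)$-module $V$, pass to its Jacquet module $V_{\mathfrak{n}_+}$ with respect to $\mathfrak{n}_+$, and use Casselman's theorem that $V_{\mathfrak{n}_+}$ is a nonzero finite-dimensional $(\mathfrak{m}\oplus\mathfrak{a})$-module. Pick an irreducible $M$-stable quotient of $V_{\mathfrak{n}_+}$ on which $\mathfrak{a}$ acts by some character $-(\nu+\rho)$; by the adjointness of Jacquet restriction and parabolic induction (Frobenius reciprocity for $(\mathfrak{g},K)$-modules), one obtains a nonzero map $V\to H_{\pi\otimes\nu}$, and irreducibility of $V$ forces this to be an embedding into a composition factor. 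The chief obstacle, and the reason one ultimately cites Vogan, is the nonvanishing and finite-dimensionality of the Jacquet module; this is a delicate analytic/algebraic fact (proved via the Casselman–Wallach asymptotics or via algebraic arguments on highest weight vectors) and is the deep input of the proposition.
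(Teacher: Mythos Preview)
The paper does not prove this proposition at all: it is stated with the attribution ``See \cite{Vog}'' and no argument is given. There is therefore no paper proof to compare your proposal against; the authors simply import the result from Vogan's book as a black box.

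That said, your sketch is the standard and correct line of argument. Part~(2) via the Iwasawa decomposition $G=KAN$ and the bijection $f\mapsto f|_K$ is exactly how one identifies $\Res^G_K H_{\pi\otimes\nu}$ with $\Ind^K_M W$, and the deduction of admissibility from Frobenius reciprocity for the compact pair $(K,M)$ is immediate. For the subrepresentation statement you rightly isolate Casselman's theorem on the nonvanishing and finite-dimensionality of the Jacquet module as the genuine input, and the adjunction between $V\mapsto V_{\mathfrak{n}_+}$ and parabolic induction then produces the embedding. One small point of care: make sure your choice of positive system for $\mathfrak{n}_+$ matches the one used in the definition of $H_{\pi\otimes\nu}$, so that the adjointness lands you in the principal series as normalised here (with the shift by $\rho$); otherwise you end up in a module induced from the opposite parabolic and must invoke an intertwining operator or duality to finish.
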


\subsection{Principal series modules for $U(p,q)$}\label{U(p,q)}
From now on, we will use the following notations.
Let $p,q$ be two positive integers and $N=p+q$. Without loss of generality, we suppose $q\geq p$.
Let $G=U(p,q)$ and $K=U(p)\times U(q)$ be its maximal compact subgroup. Let $\mathfrak{g}_{\mathbb{R}}$ be the real Lie algebra of $G$,
and  $\mathfrak{g}=\mathfrak{gl}_{N}$. Let $\mathfrak{k}_{\mathbb{R}}$ be the real Lie algebra of $K$,
and  $\mathfrak{k}=\mathfrak{gl}_{p}\times\mathfrak{gl}_{q}$.

The Cartan decomposition of $\mathfrak{g}_{\mathbb{R}}$ is given by $\mathfrak{g}_{\mathbb{R}}=\mathfrak{k}_{\mathbb{R}}\oplus\kp$, where 
$$\kp=\{M\in \text{Mat}_{N\times N}(\mathbb{C})|M=\left(\begin{array}{cc}0 & B \\\bar{B}^{t} & 0\end{array}\right), \text{ where } B\in \text{Mat}_{p\times q}(\mathbb{C}) \}.$$

Then by direct computation, we can find that 
\begin{equation*}
\mathfrak{a}=\mathbb{R}^{p}
=\{\left(\begin{array}{c|cc}0 & D & 0 \\
\hline D & 0 & 0 \\0 & 0 & 0\end{array}\right)
|D=\diag(a_{1},\ldots,a_{p}), a_{i}\in \mathbb{R}\}.
\end{equation*}

Let $\tilde{M}=\underbrace{U(1)\times U(1)\times\ldots\times U(1)}_{2p \text{ times}}\times U(q-p)$.
The centralizer of $A$ in $K$ is a subgroup of $\tilde{M}$:
\begin{equation*}\label{def-M}
M=\Delta(\underbrace{U(1)\times U(1)\times\ldots\times U(1)}_{p \text{ times}})\times U(q-p)\cong \underbrace{U(1)\times U(1)\times\ldots\times U(1)}_{p \text{ times}}\times U(q-p),
\end{equation*}
where $\Delta: \underbrace{U(1)\times U(1)\times\ldots\times U(1)}_{p \text{ times}}\to \underbrace{U(1)\times U(1)\times\ldots\times U(1)}_{2p \text{ times}}$ is the diagonal embedding.
Let $A$ be the Lie group corresponding to $\mathfrak{a}$.
The restricted roots and the basis for the root spaces are given in table 1.

{\tiny
\begin{table}[h]

\caption{ Restricted roots and restricted root spaces for $U(p,q)$}
\tabcolsep 3mm
\renewcommand{\arraystretch}{1.5}
\begin{tabular}{|r|p{7cm}|} 
\hline 
Restricted roots & Basis for the restricted root spaces $\kg_{\lambda}$ \\ 
\cline{1-2}
$\lambda=a_{i}-a_{j}$   &
$v^{1}_{ij}=(E_{i,j}-E_{j,i})+(E_{p+i,p+j}-E_{p+j,p+i})
+(E_{i,p+j}+E_{p+j,i})+(E_{j,p+i}+E_{p+i,j}),$\\ 
$(1\leq i<j\leq p)$ &
$w^{1}_{ij}=\sqrt{-1}(E_{i,j}+E_{j,i})+\sqrt{-1}(E_{p+i,p+j}+E_{p+j,p+i})
+\sqrt{-1}(E_{i,p+j}-E_{p+j,i})-\sqrt{-1}(E_{j,p+i}-E_{p+i,j})$.\\
\cline{1-2}
$\lambda=a_{j}-a_{i}$   &
$v^{2}_{ij}=(E_{i,j}-E_{j,i})+(E_{p+i,p+j}-E_{p+j,p+i})
-(E_{i,p+j}+E_{p+j,i})-(E_{j,p+i}+E_{p+i,j}),$\\ 
$(1\leq i<j\leq p)$ &
$w^{2}_{ij}=\sqrt{-1}(E_{i,j}+E_{j,i})+\sqrt{-1}(E_{p+i,p+j}+E_{p+j,p+i})
-\sqrt{-1}(E_{i,p+j}-E_{p+j,i})+\sqrt{-1}(E_{j,p+i}-E_{p+i,j})$.\\
\cline{1-2}
$\lambda=a_{i}+a_{j}$   &
$v^{3}_{ij}=(E_{i,j}-E_{j,i})-(E_{p+i,p+j}-E_{p+j,p+i})
-(E_{i,p+j}+E_{p+j,i})+(E_{j,p+i}+E_{p+i,j}),$\\ 
$(1\leq i<j\leq p)$ &
$w^{3}_{ij}=\sqrt{-1}(E_{i,j}+E_{j,i})-\sqrt{-1}(E_{p+i,p+j}+E_{p+j,p+i})
-\sqrt{-1}(E_{i,p+j}-E_{p+j,i})-\sqrt{-1}(E_{j,p+i}-E_{p+i,j})$.\\
\cline{1-2}
$\lambda=-a_{i}-a_{j}$   &
$v^{4}_{ij}=(E_{i,j}-E_{j,i})-(E_{p+i,p+j}-E_{p+j,p+i})
+(E_{i,p+j}+E_{p+j,i})-(E_{j,p+i}+E_{p+i,j}),$\\ 
$(1\leq i<j\leq p)$ &
$w^{4}_{ij}=\sqrt{-1}(E_{i,j}+E_{j,i})-\sqrt{-1}(E_{p+i,p+j}+E_{p+j,p+i})
+\sqrt{-1}(E_{i,p+j}-E_{p+j,i})+\sqrt{-1}(E_{j,p+i}-E_{p+i,j})$.\\
\cline{1-2}
$\lambda=a_{i}$   &
$v^{5}_{ij}=(E_{p+i,p+j}-E_{p+j,p+i})+(E_{i,p+j}+E_{p+j,i}),$\\
$(1\leq i\leq p, j>p)$ &
$w^{5}_{ij}=\sqrt{-1}(E_{p+i,p+j}+E_{p+j,p+i})+\sqrt{-1}(E_{i,p+j}-E_{p+j,i})$.\\
\cline{1-2}
$\lambda=-a_{i}$   &
$v^{6}_{ij}=(E_{p+i,p+j}-E_{p+j,p+i})-(E_{i,p+j}+E_{p+j,i}),$\\
$(1\leq i\leq p, j>p)$ &
$w^{6}_{ij}=\sqrt{-1}(E_{p+i,p+j}+E_{p+j,p+i})-\sqrt{-1}(E_{i,p+j}-E_{p+j,i})$.\\
\cline{1-2}
$\lambda=2a_{i},(1\leq i\leq p)$   &
$v^{7}_{i}=\sqrt{-1}(E_{p+i,p+i}-E_{i,i})+\sqrt{-1}(E_{i,p+i}-E_{p+i,i})$.\\
\cline{1-2}
$\lambda=-2a_{i},(1\leq i\leq p)$   &
$w^{7}_{i}=-\sqrt{-1}(E_{p+i,p+i}-E_{i,i})+\sqrt{-1}(E_{i,p+i}-E_{p+i,i})$.\\
\cline{1-2}
\end{tabular} 

\end{table}
}
\begin{remark}{\label{posroot}}
In the future, we will choose $\Delta_{+}^{\mathrm{res}}=\{a_{j}-a_{i},\  -a_{i}-a_{j},\ -a_{i}, \ -2a_{i}\}$ as the set of positive restricted roots when $p\neq q$. When $p=q$, we will choose 
$\Delta_{+}^{\mathrm{res}}=\{a_{j}-a_{i},\  -a_{i}-a_{j}, \ -2a_{i}\}$ as the set of positive restricted roots. We will denote $\mathfrak{n_{+}}=\oplus_{\lambda\in \Delta^{\mathrm{res}}_{+}}\mathfrak{g}(\lambda)$.
\end{remark}

\subsection{Representation theory for unitary groups}
The representation theory of the unitary group $U(n)$ is the same as the
representation theory of $GL_{n}(\mathbb{C})$. All the finite dimensional irreducible unitary representations of $U(n)$ is classified by the dominant weight, 
i.e. by a sequence of integers $\xi=(\xi_{1},\ldots, \xi_{n})$ \linebreak with 
$\xi_{1}\geq \xi_{2}\geq \cdots \geq \xi_{n}$. Denote an irreducible module with weight $\xi$ by $V(\xi)$.

We can see the following conclusions immediately.
\begin{enumerate}
\item[i)] The finite dimensional irreducible representations of $\tilde{M}$ have the form:
$$\bigotimes_{i=1}^{p}V(\alpha_{i})\otimes \bigotimes_{i=1}^{p}V(\beta_{i})\otimes V(\xi)$$ where $\alpha_{i}$ and $\beta_{i}$ are integers and $\xi=(\xi_{1},\ldots,\xi_{q-p})$ is a dominant weight. As a representation of $M$, it is 
$$\bigotimes_{i=1}^{p}V(\alpha_{i}+\beta_{i})\otimes V(\xi),$$
which is irreducible, and all finite dimensional irreducible representations of $M$ have this form.

\item[ii)] As the vector representation of $U(N)$, $\mathbb{C}^{N}=V(1,0,\ldots,0)$.
Denote it by $V_{N}$ and denote the trivial module by 
$\mathbb{1}$.
If we restrict the 
vector representation to $\tilde{M}$ in the natural way, we have a decomposition :
\begin{equation}{\label{decomp-C}}
\mathbb{C}^{N}=V^{1}_{1}\oplus\cdots\oplus V^{2p}_{1}\oplus V_{q-p},
\end{equation}
where $V^{i}_{1}=\mathbb{1}\otimes\cdots\otimes \mathbb{1}\otimes V_{1}\otimes \mathbb{1}\otimes\cdots\otimes\mathbb{1}$ is the irreducible representation of $\tilde{M}$ with $V_{1}$ appearing on the $i$-th component of the tensor product.
\end{enumerate}

\subsection{Irreducible representations of symmetric groups}

Let $\mathcal{S}_{m}$ be the symmetric group. It is well known that all 
its irreducible representations are in $1-1$ correspondence to the partitions 
of the integer $m$.

In fact, for each partition $\lambda=(\lambda_{1},\ldots,\lambda_{s})$ of $m$, we have an irreducible representation of $\mathcal{S}_{m}$, denoted by $S^{\lambda}$, which is called the Specht module. Any finite dimensional irreducible representation of $\mathcal{S}_{m}$ is isomorphic to such a module and we have 
$$d_{\lambda}:=\dim S^{\lambda}=\frac{m!}{\Pi_{k=1}^{|\lambda|} h_{k}(\lambda)},$$
where $h_{k}(\lambda)$ is the hook length of the Young diagram corresponding to $\lambda$, and $|\lambda|=\sum_{i=1}^{s}\lambda_{i}$.
For more details, see \cite{Ful}.

Let $S^{\lambda}$ be the Specht module corresponding to the
partition $\lambda$.
Consider the Jucys-Murphy elements 
\begin{equation*}
L_{s}=\sum_{j<s}S_{sj}, \text{ for }s=2,\ldots,m, \text{ and }L_{1}=0. 
\end{equation*}
These elements commute, and Murphy \cite{Mur} constructed a basis of $S^{\lambda}$ consisting of common eigenvectors for $L_{s}$. Now let us recall the construction of such basis.

Let $\{T_{i}\}_{i=1}^{d_{\lambda}}$ be set of the standard Young tableaux with shape $\lambda$. 
For an element at position $(k,l)$ in a standard Young tableau, its class
is defined to be $l-k$.
Let $\alpha_{i,s}$ for $i=1,\ldots,n$ be the class of the position where $i$ sits in $T_{s}$. 
For example, if we consider the standard Young tableau $T_{1}$:
\begin{equation*}
T_{1}=\tableau{1&2&3&\cdots&n_{1}\\
{\scriptstyle n_{1}+1}& {\scriptstyle n_{1}+2}&\cdots&{\scriptstyle  n_{1}+n_{2}}\\\cdots&\cdots&\cdots\\
\cdots&m} 
\end{equation*}
and put the class in the boxes, we get:
\begin{equation*}
\tableau{0&1&2&\cdots&\cdots&{\scriptstyle n_{1}-1}\\
-1&0&1&\cdots&{\scriptstyle n_{2}-2}\\-2&-1&\cdots\\
\cdots&{\scriptstyle m-s}} 
\end{equation*}

In fact, in this case, we have:
$$\alpha_{i,1}=i-(\lambda_{1}+\cdots+\lambda_{k-1})-k,
\text{ for } \lambda_{1}+\cdots+\lambda_{k-1}<i\leq \lambda_{1}+\cdots+\lambda_{k} .$$

Let $e_{s}=e_{T_{s}}=\sum_{\sigma \in C_{T_{s}}}\sgn(\sigma)\sigma \{T_{s}\}$ be the standard basis for $S^{\lambda}$, where $C_{T_{s}}$ is the column permutations of $T_{s}$ which is a subgroup of $\mathcal{S}_{m}$, $\sgn$ is the sign function of the permutation $\sigma$, and $\{T_{s}\}$ is the element in $S^{\lambda}$ corresponding to the standard Young tableau $T_{s}$. Define
\begin{equation*}
E_{s}=\prod_{c=-m+1}^{m-1}\prod_{\{i|\alpha_{i,s}\neq c,i\leq m\}}
\frac{c-L_{i}}{c-\alpha_{i,s}}.
\end{equation*}
Then we know from \cite{Mur} that $\{w_{s}|w_{s}=E_{s}e_{s}\}$ is a basis for $S^{\lambda}$, and $L_{i}w_{s}=\alpha_{i,s}w_{s}$.

Now define a new family of operators as follows:
\begin{equation*}
\hat{L}_{i}=\sum_{i<j}S_{ij}, \text{ for } i=1,\ldots, m-1, \text{ and } \hat{L}_{m}=0.
\end{equation*}

We have the following lemma:
\begin{lemma}\label{eigenvector}
Let $\sigma=\prod_{i=1}^{[\frac{m}{2}]}S_{i(m-i+1)}$. Then 
$\hat{w}_{s}=\sigma E_{s}e_{s}$ are common eigenvectors of $\hat{L}_{i}$ for $i=1,\ldots,m$, $s=1,\ldots, d_{\lambda}$. The eigenvalues are 
\begin{equation*}
\hat{\alpha}_{i,s}=\alpha_{m-i+1,s} \text{\ for } i\neq m, \text{ and }
\hat{\alpha}_{m,s}=0. 
\end{equation*}
\end{lemma}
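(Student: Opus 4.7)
The plan is to observe that $\sigma$ is nothing but the involution $k \mapsto m-k+1$ in $\mathcal{S}_{m}$, and to show that conjugation by $\sigma$ intertwines the Jucys--Murphy elements $L_{s}$ with the operators $\hat{L}_{i}$. Once that is established, the eigenvector property transfers immediately from $\{w_{s}\}$ to $\{\hat{w}_{s}\}$.

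Concretely, first I would verify that $\sigma \in \mathcal{S}_{m}$ acts on $\{1,\ldots,m\}$ by $\sigma(k) = m-k+1$; this is clear because the transpositions $S_{i(m-i+1)}$ for $i = 1,\ldots,[m/2]$ are disjoint and together swap each $k$ with $m-k+1$. Next, using the general identity $\sigma S_{ab} \sigma^{-1} = S_{\sigma(a)\sigma(b)}$ in the group algebra $\mathbb{C}[\mathcal{S}_{m}]$, I would compute
\begin{equation*}
\sigma L_{m-i+1} \sigma^{-1} = \sum_{j < m-i+1} S_{\sigma(m-i+1),\sigma(j)} = \sum_{j < m-i+1} S_{i,\,m-j+1}.
\end{equation*}
As $j$ runs over $\{1,\ldots,m-i\}$, the index $m-j+1$ runs over $\{i+1,\ldots,m\}$, so the sum equals $\sum_{k>i} S_{ik} = \hat{L}_{i}$. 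Thus $\hat{L}_{i} = \sigma L_{m-i+1} \sigma^{-1}$ for $i = 1,\ldots,m-1$.

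From this conjugation relation and the known identity $L_{s} w_{t} = \alpha_{s,t} w_{t}$ from Murphy's construction, I get
\begin{equation*}
\hat{L}_{i} \hat{w}_{t} = \sigma L_{m-i+1} \sigma^{-1} (\sigma w_{t}) = \sigma L_{m-i+1} w_{t} = \alpha_{m-i+1,t}\, \sigma w_{t} = \alpha_{m-i+1,t}\, \hat{w}_{t},
\end{equation*}
which gives the claimed eigenvalue $\hat{\alpha}_{i,s} = \alpha_{m-i+1,s}$ for $i \neq m$. For $i = m$, the operator $\hat{L}_{m}$ is $0$ by definition, so $\hat{w}_{t}$ is trivially an eigenvector with eigenvalue $0$; this matches $\hat{\alpha}_{m,s} = 0$, and is also consistent with $\alpha_{1,s} = 0$ since in every standard Young tableau the entry $1$ sits in position $(1,1)$ which has class $1-1 = 0$.

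There is no real obstacle here: the only point requiring any care is the bookkeeping in the conjugation computation, in particular making sure that the index substitution $j \mapsto m-j+1$ correctly reverses the inequality $j < m-i+1$ into $m-j+1 > i$, so that the sum $\sigma L_{m-i+1} \sigma^{-1}$ becomes $\hat{L}_{i}$ rather than some variant summing over the wrong range.
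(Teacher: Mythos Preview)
Your proof is correct and follows essentially the same approach as the paper: both identify the key conjugation relation $\hat{L}_{i} = \sigma L_{m-i+1}\sigma^{-1}$ (the paper writes $\sigma L_{m-i+1}\sigma$ since $\sigma$ is an involution) and then transfer the eigenvector property from $w_{s}$ to $\hat{w}_{s} = \sigma w_{s}$. Your version simply spells out the index bookkeeping and the $i=m$ case more explicitly than the paper's terse two-line argument.
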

\begin{proof}
Notice that
$\hat{L}_{i}=\sigma L_{m-i+1} \sigma$. 
So $$\hat{L}_{i}\hat{w}_{s}=\sigma L_{m-i+1}E_{s}e_{s}=\alpha_{m-i+1,s}\sigma E_{s}e_{s}=\hat{\alpha}_{i,s}\hat{w}_{s}.$$ 
\end{proof}

\subsection{Schur-Weyl duality}\label{Schur-Weyl}

Let $V=\mathbb{C}^{N}$ be the vector representation of $G=GL(N,\mathbb{C})$. Let $\mathcal{S}_{m}$ act on $V^{\otimes m}$ by permutation and $G$ act on it diagonally.

Then from the duality principle, we have that as an $(\mathcal{S}_{m}\times G)$-module,
$$V^{\otimes m}=\bigoplus_{|\lambda|=m, \text{ height of }\lambda\leq  N}V(\lambda)\otimes S^{\lambda},$$
where $S^{\lambda}$ is the Specht module of $\mathcal{S}_{m}$ corresponding to $\lambda$ and $V(\lambda)$ is the highest weight representation of $G$
corresponding to $\lambda$.

\section{The Lie-theoretic construction of representations of the degenerate affine Hecke algebras}\label{knownresults}
\subsection{Degenerate affine Hecke algebras}
In this section, let us recall the definition of degenerate affine Hecke algebras. For more details, see \cite{Ch}.

Let $\mathfrak{h}$ be a finite dimensional real vector space with a
positive definite symmetric bilinear form $(\cdot,\cdot)$.  Let
$\{\epsilon_{i}\}$ be a basis for $\mathfrak{h}$ such that
$(\epsilon_{i},\epsilon_{j})=\delta_{ij}$.  Let $R$ be an irreducible root
system in $\mathfrak{h}$ (possibly non-reduced). 
Let $R_{+}$ be the set of positive roots of $R$, and
let $\Pi=\{\alpha_{i}\}$ be the set of simple roots.
For any root $\alpha$, the corresponding 
coroot is $\alpha^{\vee}=2\alpha/(\alpha,\alpha)$. 
Let $P=\Hom_{\mathbb{Z}}(Q^{\vee},\mathbb{Z})$ be the weight lattice. 
 
Let $\mathcal{W}$ be the Weyl group of $R$ which is generated by $\Sigma$, the set of reflections in $\mathcal{W}$. Let 
 $S_\alpha\in \Sigma$ be the reflection corresponding to the root
$\alpha$. In particular, we write $S_i$ for the simple
reflections $S_{\alpha_i}$.

\begin{definition}
For any conjugation invariant
function $\kappa: \Sigma\to \mathbb{C}$, the {\em degenerate affine Hecke algebra (dAHA)}
$\mathcal{H}(\kappa)$ is the quotient of the free product 
$\mathbb{C}\mathcal{W} * S\mathfrak{h}$ by the 
relations
$$
S_i y-y^{S_i}S_i=\kappa(S_i)\alpha_i(y), y\in \mathfrak{h}.
$$
\end{definition}

Here are two examples.
\begin{example}[dAHA of type $A_{n-1}$]
In this case, $\kappa$ reduces to a complex parameter. The dAHA of type $A_{n-1}$, $\mathcal{H}_{n}(\kappa)$, is generated by 
$\mathbb{C}\mathcal{S}_{n}$, and $y_{1}, \ldots, y_{n}$ with relations:
\begin{eqnarray*}
\sum_{i=1}^{n}y_{i}=0;\quad [S_{i},y_{j}]=0,\quad \forall j\neq i,i+1;\quad [y_{i},y_{j}]=0;
\quad S_{i}y_{i}-y_{i+1}S_{i}=\kappa.
\end{eqnarray*}
For any $c\neq 0$, we have an isomorphism $\mathcal{H}_{n}(\kappa)\cong \mathcal{H}_{n}(c\kappa)$. 
 
\end{example}
\begin{example}[dAHA of type $BC_{n}$]
The function $\kappa$
reduces to two parameters $\kappa=(\kappa_1,\kappa_2)$.

Let $\mathcal{W}_{BC_{n}}=\mathcal{S}_n\ltimes (\mathbb{Z}_{2})^n$ 
be the Weyl group of type $BC_{n}$.
We denote by $S_{ij}$ the reflection in this group 
corresponding to the root $\epsilon_i-\epsilon_j$, 
and by $\gamma_{i}$ the reflection corresponding to
$\epsilon_{i}$. Then $\mathcal{W}$ is generated by 
$S_{i}=S_{i(i+1)}, i=1,\ldots, n-1$, and $\gamma_{n}$. 

The type $BC_{n}$ 
dAHA $\mathcal{H}_{n}(\kappa_1,\kappa_2)$
is generated by $y_{1},\ldots,y_{n}$ 
and $\mathbb{C}\mathcal{W}_{BC_{n}}$ with relations: 
\begin{eqnarray*}
S_{i}y_{i}-y_{i+1}S_{i}=\kappa_{1};\quad
[S_{i},y_{j}]=0,\quad \forall j\neq i,i+1;\\
\gamma_{n}y_{n}+y_{n}\gamma_{n}=\kappa_2;\quad 
[\gamma_{n},y_{j}]=0,\quad \forall j\neq n; \quad [y_{i},y_{j}]=0. 
\end{eqnarray*}
For any $c\neq 0$, we have an isomorphism $\mathcal{H}_{n}(\kappa_1,\kappa_2)\cong \mathcal{H}_{n}(c\kappa_1,c\kappa_2)$. 
\end{example}

\subsection{Type $A_{n-1}$ case}

Consider the Harish-Chandra pair 
$(\mathfrak{sl}_{N}\times\mathfrak{sl}_{N}, \mathfrak{sl}_{N})$, 
where $\mathfrak{sl}_{N}\subset \mathfrak{sl}_{N}\times\mathfrak{sl}_{N}$ is the diagonal. Let $\mathfrak{g}=\mathfrak{sl}_{N}$ and let $HC(\mathfrak{g})$ be the category of
Harish-Chandra bimodules, i.e. the category of $\mathfrak{g}$-bimodules which are locally finite under the adjoint action of $\mathfrak{g}$.

For any $M\in HC(\mathfrak{g})$, we define 
$$F_{n}(M)=(M\otimes (\mathbb{C}^{N})^{\otimes n})^{\mathfrak{g}},$$
where $\mathfrak{g}$ acts on $M$ by the adjoint action and acts on $\mathbb{C}^{N}$ by the vector representation. Let the $A_{n-1}$-type Weyl group $\mathcal{S}_{n}$ act on $F_{n}(M)$
by permutation of the components of $(\mathbb{C}^{N})^{\otimes n}$.

Let $\{b_{k}\}$ be an othonormal basis for $\mathfrak{g}$ and define
$$y_{i}=-\sum_{k} b_{k}\otimes (b_{k})_{i}-\sum_{1\leq j<i}S_{ij}-\frac{n-1}{2},$$
where $S_{ij}\in \mathcal{S}_{n}$ is the permutation $(i,j)$, and the action of $ b_{k}\otimes (b_{k})_{i}$ on $F_{n}(M)$ is defined by letting the first $b_{k}$ act on $X$ and the second one act on the $i$-th component of $(\mathbb{C}^{N})^{\otimes n}$ by vector representation. Then we have the following results.

\begin{theorem}[\cite{AS}]
$F_{n}$ defines a functor from the category of
Harish-Chandra bimodules to the category of finite dimensional $\mathcal{H}_{n}(-1)$-modules.
\end{theorem}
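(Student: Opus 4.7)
The plan is to establish three things: finite-dimensionality of $F_n(M)$, the $\mathcal{H}_n(-1)$-relations for the given operators, and functoriality of $F_n$. Finite-dimensionality is immediate from Schur--Weyl duality (Section~\ref{Schur-Weyl}): decomposing $(\mathbb{C}^N)^{\otimes n}=\bigoplus_\lambda V(\lambda)\otimes S^\lambda$ yields
\begin{equation*}
F_n(M)=\bigoplus_\lambda \Hom_{\mathfrak{g}}\bigl(V(\lambda)^{*},M\bigr)\otimes S^\lambda,
\end{equation*}
where the sum is over partitions of $n$ of height at most $N$ (finitely many) and each $\Hom$ space is finite-dimensional because Harish-Chandra bimodules have finite multiplicities in every $\mathfrak{g}$-isotypic component. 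Functoriality is clear from the construction, since both the tensor product and the invariants functor are natural and the operators $y_i$, $S_{ij}$ are given by universal expressions.

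For the Hecke relations, first note that $\mathcal{S}_n$ commutes with the diagonal $\mathfrak{g}$-action on $V^{\otimes n}$, and $\Omega_{0i}:=\sum_k b_k\otimes (b_k)_i$ commutes with the diagonal action on $M\otimes V^{\otimes n}$ because $\sum_k b_k\otimes b_k$ is $\mathfrak{g}$-invariant in $U(\mathfrak{g})^{\otimes 2}$; hence both $S_{ij}$ and $y_i$ preserve $F_n(M)$. Of the defining relations, $[S_i,y_j]=0$ for $j\neq i,i+1$ is elementary: $S_i$ does not touch the $j$-th vector factor, and $[S_i,\sum_{k<j}S_{jk}]=0$ follows from the conjugation identity $S_iS_{ji}S_i^{-1}=S_{j,i+1}$. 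The braid-like relation $S_iy_i-y_{i+1}S_i=-1$ is a short computation using $S_i\Omega_{0i}=\Omega_{0,i+1}S_i$ and $S_iS_{ij}=S_{i+1,j}S_i$ for $j<i$, which leaves a residual $S_{i+1,i}S_i=1$ of the correct sign. For $\sum_iy_i=0$, the key input is the identity $\Omega_{ij}|_{V\otimes V}=S_{ij}-1/N$ for the vector representation of $\mathfrak{sl}_N$, combined with the fact that on $\mathfrak{g}$-invariants $\sum_i\Omega_{0i}$ equals minus the $\mathfrak{g}$-Casimir on $V^{\otimes n}$ modulo central contributions from $C_M$ and $C_V$; the resulting sum of transpositions exactly cancels $-\sum_i\sum_{j<i}S_{ij}-\tfrac{n-1}{2}$.

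The main obstacle is the commutativity $[y_i,y_j]=0$ for $i<j$. Expanding the commutator yields $[\Omega_{0i},\Omega_{0j}]$, two mixed cross-terms, and the purely combinatorial piece $[\sum_{k<i}S_{ik},\sum_{\ell<j}S_{j\ell}]$. The Lie-algebraic contribution is controlled by the invariance identity $[\Omega_{0i},\Omega_{0j}+\Omega_{ij}]=0$, which holds globally because $\Omega_{0j}+\Omega_{ij}$ is the image of $\sum_k b_k\otimes b_k$ under the coproduct bundling $V_i$ with $V_j$. Substituting $\Omega_{ij}=S_{ij}-1/N$ on the vector factors and using $S_{ij}\Omega_{0i}=\Omega_{0j}S_{ij}$, one finds that $[\Omega_{0i},\Omega_{0j}]=(\Omega_{0j}-\Omega_{0i})S_{ij}$ is exactly cancelled by the only surviving cross-term, namely the $\ell=i$ summand of $[\Omega_{0i},\sum_\ell S_{j\ell}]$. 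The remaining combinatorial piece vanishes by the pairwise identity $[S_{ik},S_{ji}]+[S_{ik},S_{jk}]=0$ for each $k<i$, a direct 3-cycle calculation in $\mathcal{S}_n$. Coordinating the Lie-algebraic cancellation with the combinatorial one, and in particular exploiting that the vector representation of $\mathfrak{sl}_N$ makes $\Omega_{ij}$ land back in the symmetric group algebra, is the technical heart of the argument.
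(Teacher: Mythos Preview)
The paper does not give its own proof of this theorem; it is stated as a result of Arakawa--Suzuki \cite{AS}, and the subsequent remark explains that the bimodule formulation follows by composing with the Bernstein--Gelfand functor. So there is nothing in the paper to compare your argument against.

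Your sketch is essentially the standard \cite{AS} argument and is largely correct. The finite-dimensionality via Schur--Weyl duality and admissibility of Harish--Chandra bimodules is right; the preservation of invariants by $S_{ij}$ and $\Omega_{0i}$, the relations $[S_i,y_j]=0$, the braid-type relation, and especially the commutativity $[y_i,y_j]=0$ (via $[\Omega_{0i},\Omega_{0j}+\Omega_{ij}]=0$ together with the 3-cycle identity $[S_{ik},S_{ji}]+[S_{ik},S_{jk}]=0$) are all handled correctly.

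Two caveats are worth noting. First, your computation of $S_i y_i - y_{i+1} S_i$ actually produces $+1$ rather than $-1$; this is harmless since $\mathcal{H}_n(\kappa)\cong\mathcal{H}_n(-\kappa)$, but ``of the correct sign'' overstates the match with the paper's stated parameter. Second, and more substantively, your argument for $\sum_i y_i=0$ is incomplete. The Casimir identity you invoke yields, on invariants,
\[
\sum_i \Omega_{0i}=-\tfrac{1}{2}C_{M}-\tfrac{n}{2}c_V-\sum_{i<j}\Omega_{ij},
\]
and substituting $\Omega_{ij}=S_{ij}-1/N$ one finds $\sum_i y_i=\tfrac{1}{2}C_{M}+(\text{scalar})$, which is central but not literally zero unless $M$ has a fixed infinitesimal character or one shifts each $y_i$ by an $M$-dependent constant. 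In the bimodule setting you must also specify whether $b_k$ acts on $M$ by left multiplication or by the adjoint action, since only the latter is what the invariance condition controls; this choice changes the computation. These conventions are pinned down in \cite{AS} and \cite{CEE} but are left implicit both in the paper and in your sketch.
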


\begin{remark}
This is not quite the original statement of the paper \cite{AS}. In fact, they constructed a functor from the BGG category for $\mathfrak{gl}(n)$ to the category of  finite dimensional $\mathcal{H}_{n}(-1)$-modules. We can obtain the functor in \cite{AS} by composing the Bernstein-Gelfand functor \cite{BG} (from the BGG category to the category of Harish-Chandra bimodules) with $F_{n}$. Arakawa and Suzuki showed that 
the functor they constructed is exact. They 
also studied the images of the standard modules in the BGG category
under this functor. For more details, see \cite{AS}.
\end{remark}

\subsection{Type $BC_{n}$ case}

Let us recall the construction of the functor $F_{n,p,\mu}$ in \cite{EFM}.
Let $\mathbb{C}^{N}$ be the vector representation of $\mathfrak{g}$.
Let $\mathcal{M}$ be a $\mathfrak{g}$-module. Define 
\begin{equation*}
F_{n,p,\mu}(\mathcal{M})=(\mathcal{M}\otimes (\mathbb{C}^{N})^{\otimes n})^{\mathfrak{k}_{0},\mu},
\end{equation*}
where $\mathfrak{k}_{0}$ is the subalgebra in $\mathfrak{k}=\mathfrak{gl}_{p}\times\mathfrak{gl}_{q}$ consisting of trace zero elements and $(\mathfrak{k}_{0},\mu)$-invariants means for all $x\in \mathfrak{k}_{0}$,
$xv=\mu\chi(x)v$. Here $\chi$ is a character of $\mathfrak{k}$ defined in \cite{EFM}:
$$\chi(\left(\begin{array}{cc}X_1 & 0 \\0 & 0\end{array}\right))=q\tr X_{1}-p\tr X_{2}.$$

The Weyl group $\mathcal{W}_{BC_{n}}$ acts on $F_{n,p,\mu}(\mathcal{M})$
in the following way: the element $S_{ij}$ acts by exchanging 
the $i$-th and $j$-th factors, 
and $\gamma_{i}$ acts by multiplying the $i$-th factor by $J=\left(\begin{array}{cc}I_p &  \\ & -I_q\end{array}\right)$
(here we regard $\mathcal{M}$ as the 0-th factor).

Define elements $\tilde{y}_{k}\in \End(F_{n,p,\mu}(\mathcal{M}))$ as follows: 
\begin{equation}\label{new-y}
\tilde{y}_{k} =-\sum_{i|j}E_{i,j}\otimes(E_{j,i})_{k}, 
\text{ for } k=1,\ldots, n,
\end{equation}
where 
$\sum_{i|j}=\sum_{i=1}^{p}\sum_{j=p+1}^{n}+\sum_{j=1}^{p}\sum_{i=p+1}^{n}$, 
the first component acts 
on $\mathcal{M}$ and the second component acts on the $k$-th factor
of the tensor product. 
\begin{theorem}[\cite{EFM}]\label{aff} 
The above action of $\mathcal{W}$ and 
the elements $\tilde{y}_k$ given by \eqref{new-y} combine into 
a representation of the degenerate affine Hecke algebra 
$\mathcal{H}(\kappa_1,\kappa_2)$ on the space $F_{n,p,\mu}(M)$,
with $\kappa_1=\dfrac{p-q-\mu N}{2},\kappa_2=1$ and
\begin{equation*}
y_{i}=\tilde{y}_{i}+\frac{p-q-\mu N}{2}\gamma_{i}+
\frac{1}{2}\sum_{k>i}S_{ik}-\frac{1}{2}\sum_{k<i}S_{ik}+
\frac{1}{2}\sum_{i\neq k}S_{ik}\gamma_{i}\gamma_{k}.
\end{equation*}
So we have a functor $F_{n,p,\mu}$ 
from the the category of 
$\mathfrak{g}$-modules to the category of 
representations of type $BC_{n}$ dAHA with such parameters.
\end{theorem}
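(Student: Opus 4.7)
The plan is to verify the defining relations of the type $BC_{n}$ dAHA one at a time on $F_{n,p,\mu}(\mathcal{M})$, with $\mathcal{W}_{BC_{n}}$ acting as specified and the commuting generators given by the displayed formula for $y_{i}$. First I would confirm well-definedness: the $S_{ij}$ permute tensor factors and $\gamma_{i}$ multiplies the $i$-th factor by the element $J$, which is central in $K=U(p)\times U(q)$, so both preserve the diagonal $\mathfrak{k}$-action and hence the $(\mathfrak{k}_{0},\mu)$-invariants. For $\tilde{y}_{k}$, I would write the Casimir-like element $\Omega_{0k}=\sum_{i,j}E_{i,j}\otimes (E_{j,i})_{k}$ in block form as $\Omega_{0k}=\Omega_{0k}^{\mathfrak{k}}+\Omega_{0k}^{\mathfrak{p}}$, so that $\tilde{y}_{k}=-\Omega_{0k}^{\mathfrak{p}}$. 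Since $\Omega_{0k}$ commutes with the diagonal $\mathfrak{g}$-action and $\Omega_{0k}^{\mathfrak{k}}$ acts by a scalar on $(\mathfrak{k}_{0},\mu)$-invariants (from the character $\mu\chi$ together with a piece in the center of $\mathfrak{k}$), $\tilde{y}_{k}$ preserves the invariants.

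Next I would dispatch the easy commutation relations $[S_{i},y_{j}]=0$ for $j\neq i,i+1$ and $[\gamma_{n},y_{j}]=0$ for $j<n$. In both cases, $S_{i}$ or $\gamma_{n}$ acts on tensor factors disjoint from those supporting $\tilde{y}_{j}$, and the Weyl-group sums $\sum_{k>j}S_{jk}$, $\sum_{k<j}S_{jk}$, $\sum_{k\neq j}S_{jk}\gamma_{j}\gamma_{k}$ are invariant under conjugation by $S_{i}$ (via reindexing using $S_{i}S_{jk}S_{i}=S_{j\,\sigma(k)}$) and by $\gamma_{n}$ (using $\gamma_{n}S_{jk}=S_{jk}\gamma_{n}$ when $n\neq j,k$, and $\gamma_{n}^{2}=1$).

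The core of the argument is the two braid-like relations. For $S_{i}y_{i}-y_{i+1}S_{i}=\kappa_{1}$, the identity $S_{i}\tilde{y}_{i}=\tilde{y}_{i+1}S_{i}$ is immediate because $S_{i}$ simply swaps the $i$-th and $(i+1)$-th tensor factors, so the constant $\kappa_{1}$ must arise purely from $S_{i}Y_{i}-Y_{i+1}S_{i}$, where $Y_{i}$ collects the Weyl-group tail of $y_{i}$; I would expand this using $S_{i}S_{ik}S_{i}=S_{(i+1)k}$ for $k\neq i,i+1$ and track that the prescribed $\pm 1/2$ coefficients give exactly $(p-q-\mu N)/2$. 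For $\gamma_{n}y_{n}+y_{n}\gamma_{n}=\kappa_{2}$, the key observation is that $JE_{j,i}J=-E_{j,i}$ whenever $E_{j,i}$ lies in an off-diagonal $\mathfrak{p}$-block, which yields $\{\gamma_{n},\tilde{y}_{n}\}=0$; the anti-commutators of $\gamma_{n}$ with the $S_{nk}$- and $S_{nk}\gamma_{n}\gamma_{k}$-sums then combine (most terms cancel pairwise), and together with the $\gamma_{n}^{2}=1$ contribution from the $\gamma_{i}$-term in $y_{i}$ they deliver $\kappa_{2}$.

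The main obstacle I expect is the commutativity $[y_{i},y_{j}]=0$, which requires many delicate cancellations and is precisely the reason for the specific coefficients in the formula for $y_{i}$. My strategy would be to split $y_{i}=\tilde{y}_{i}+Y_{i}$ and analyze the brackets $[\tilde{y}_{i},\tilde{y}_{j}]$, $[\tilde{y}_{i},Y_{j}]+[Y_{i},\tilde{y}_{j}]$, and $[Y_{i},Y_{j}]$ separately. The first is a Casimir-type commutator that I would reduce using the block decomposition $\Omega_{0k}=\Omega_{0k}^{\mathfrak{k}}+\Omega_{0k}^{\mathfrak{p}}$ and the fact that $\Omega_{0k}^{\mathfrak{k}}$ acts by a scalar on $(\mathfrak{k}_{0},\mu)$-invariants, turning the $\mathfrak{p}$-$\mathfrak{p}$ bracket into an expression in $\mathfrak{k}$-components whose action on the invariants is computable. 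The cross terms produce Weyl-group-valued operators that, together with the purely combinatorial $[Y_{i},Y_{j}]$, must conspire to cancel. I would organize the bookkeeping by the type of restricted root appearing in Table 1 (the $\pm a_{i}\pm a_{j}$ contributions coming from the $S_{ij}$- and $S_{ij}\gamma_{i}\gamma_{j}$-parts, and the $\pm 2a_{i}$ contributions from the $\gamma_{i}$-coefficients) and verify the cancellations one root-class at a time.
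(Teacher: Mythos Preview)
The paper does not prove this theorem at all: it is quoted verbatim from \cite{EFM} and carries no argument here, so there is no ``paper's own proof'' to compare against. Your outline---check well-definedness on the invariants, then verify the dAHA relations one at a time, with the commutativity $[y_i,y_j]=0$ isolated as the hard step---is exactly the natural direct approach and is how such results are established in the source.

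One caution on your bookkeeping. You assert that $S_iY_i-Y_{i+1}S_i$ yields the constant $(p-q-\mu N)/2$. If you actually carry this out, the $\gamma$-terms cancel (since $S_i\gamma_iS_i=\gamma_{i+1}$), the $S_{ik}\gamma_i\gamma_k$-terms cancel similarly, and the two $\tfrac12 S_{i(i+1)}$ contributions from the $\pm\tfrac12\sum S_{ik}$ sums combine to give~$1$, not $(p-q-\mu N)/2$. Conversely, in your $\gamma_n$ computation the term $\tfrac{p-q-\mu N}{2}\gamma_n$ contributes $p-q-\mu N$ to the anticommutator while the $S_{nk}$ and $S_{nk}\gamma_n\gamma_k$ pieces cancel. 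So the values of $\kappa_1$ and $\kappa_2$ that emerge from the displayed formula for $y_i$ are the reverse of what is printed in the statement; this is almost certainly a typographical swap in the paper rather than an error in your strategy, but you should not trust the labels blindly when you write up the verification.
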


Since we know that the principal series module $H_{\pi\otimes \nu}$ is an admissible $G$-module, the Harish-Chandra part (or the $K$-finite part) $(H_{\pi\otimes \nu})_{HC}$ is a $(\mathfrak{g},K)$-module.
So we can apply the functor $F_{n,p,\nu}$ to $(H_{\pi\otimes \nu})_{HC}$. Since the functor only depends on the $(\mathfrak{k}_{0},\mu)$-invariant part, $F_{n,p,\nu}((H_{\pi\otimes \nu})_{HC})=F_{n,p,\nu}(H_{\pi\otimes \nu})$.

In the next several sections, we will study the module $F_{n,p,\nu}(H_{\pi\otimes \nu})$.

\section{Images of principal series modules under $F_{n,p,\mu}$}
Let $K$, $M$ be the subgroups of $U(p,q)$ defined in section \ref{U(p,q)}.
Now let $(\pi, W)$ be an irreducible unitary representation of $M$ with the form
\begin{equation}\label{W}
W=V(n_{1})\otimes\cdots\otimes  V(n_{p})\otimes
V(\xi),
\end{equation}
where $n_{i}$'s are integers and $\xi=(\xi_{1},\ldots,\xi_{q-p})$ is a dominant weight for $U(q-p)$.

Let $\nu=(\nu_{1},\ldots,\nu_{p})\in \mathfrak{a}^{*}_{\mathbb{C}}$. Notice that $\mathfrak{k}=\mathfrak{k}_{0}+\mathfrak{u}(1)$, and 
for any $X\in \mathfrak{u}(1)\subset \mathfrak{k}$, the action on $F_{n,p,\mu}(H_{\pi\otimes\nu})$ is given by
\begin{equation*}
Xv=(\sum_{i=1}^{p}n_{i}+\sum_{i=1}^{q-p}\xi_{i}+n)v, \quad \forall v\in F_{n,p,\mu}(H_{\pi\otimes\nu}).
\end{equation*}

Let $\tau=\dfrac{1}{N}(\sum_{i=1}^{p}n_{i}+\sum_{i=1}^{q-p}\xi_{i}+n)$ and define $\mathbb{1}_{\vartheta}$ to be a $1$-dimensional representation of $K$ with character $\vartheta=-\mu\chi-\tau$. From the definition of the character $\chi$ of $\kk$ in \cite{EFM}, we have
$$\vartheta(\left(\begin{array}{cc}K_1 & 0 \\0 & K_2\end{array}\right))=-\mu(q\tr K_{1}-p\tr K_{2})-\tau(\tr (K_{1})+\tr(K_{2})).$$
In order to make $\mu\chi$ lifted to a unitary group character for $K$, we assume
$\mu p, \mu q\in \mathbb{Z}$ from now on.

It is easy to see that
\begin{eqnarray*}
F_{n,p,\mu}(H_{\pi\otimes\nu})
=(H_{\pi\otimes\nu}
\otimes(\mathbb{C}^{N})^{\otimes n})^{\mathfrak{k}_{0},\mu}
\cong(H_{\pi\otimes\nu}
\otimes(\mathbb{C}^{N})^{\otimes n}\otimes \mathbb{1}_{\vartheta})^{K}.
\end{eqnarray*}

\begin{remark}
We can omit the condition that  $\mu p, \mu q\in \mathbb{Z}$ if we consider the universal covering of the group $U(p,q)$. For simplicity,
we assume this condition in the future discussion.
\end{remark}

\subsection{Dimension of the image}\label{dimofimage}
By Frobenius reciprocity and proposition \ref{psmod}, we can easily see that 
\begin{eqnarray*}
(H_{\pi\otimes\nu}
\otimes(\mathbb{C}^{N})^{\otimes n}\otimes \mathbb{1}_{\vartheta})^{K}
&\stackrel{\scriptscriptstyle I}{\simeq}&(\Res^{G}_{K}(H_{\pi\otimes\nu}
\otimes(\mathbb{C}^{N})^{\otimes n})\otimes \mathbb{1}_{\vartheta})^{K}\\
\nonumber&\stackrel{\scriptscriptstyle II}{\simeq}&(\Ind^{K}_{M}(W)
\otimes\Res^{G}_{K}(\mathbb{C}^{N})^{\otimes n}\otimes \mathbb{1}_{\vartheta})^{K}\\
\nonumber&\stackrel{\scriptscriptstyle III}{\simeq}&(W
\otimes(\mathbb{C}^{N})^{\otimes n}\otimes \mathbb{1}_{\vartheta})^{M}
\end{eqnarray*}
For any elements in $(H_{\pi\otimes\nu}
\otimes(\mathbb{C}^{N})^{\otimes n}\otimes \mathbb{1}_{\vartheta})^{K}$, it is a linear combination of $f\otimes v$ where $f$ is a $W$-valued function on $G$ and $v\in (\mathbb{C}^{N})^{\otimes n}\otimes \mathbb{1}_{\vartheta}$. The isomorphisms $I, II$ tell us that such an element only depends on $f|_{K}\otimes v$.
The isomorphism $III$ comes from the Frobenius reciprocity. 
Then we have the following:
\begin{lemma}{\label{isospace}}
We have an isomorphism of vector spaces:
\begin{eqnarray*}
(H_{\pi\otimes\nu}\otimes(\mathbb{C}^{N})^{\otimes n})^{\mathfrak{k}_{0},\mu}
&\to& 
(W\otimes (\mathbb{C}^{N})^{\otimes n}\otimes \mathbb{1}_{\vartheta})^{M},\\
\sum f\otimes v  &\mapsto& \sum f|_{K}(e)\otimes v\otimes 1_{\vartheta},
\end{eqnarray*}
where $1_{\vartheta}$ is a nonzero element in $\mathbb{1}_{\vartheta}$.
\end{lemma}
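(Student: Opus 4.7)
The plan is to assemble the isomorphism as the composition of the three isomorphisms $I, II, III$ already indicated in the excerpt, and then to verify that the resulting composition is given by the explicit formula $\sum f\otimes v\mapsto \sum f|_{K}(e)\otimes v\otimes 1_{\vartheta}$.

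First I would dispose of isomorphism $I$. Since $\mathfrak{k}=\mathfrak{k}_{0}\oplus\mathfrak{u}(1)$ and $\mathbb{1}_{\vartheta}$ is a one-dimensional $K$-module with differential $-\mu\chi-\tau$, tensoring with $\mathbb{1}_{\vartheta}$ converts the $(\mathfrak{k}_{0},\mu)$-invariance condition into $\mathfrak{k}_{0}$-invariance, while the residual $\mathfrak{u}(1)$-action on $H_{\pi\otimes\nu}\otimes (\mathbb{C}^{N})^{\otimes n}$ is precisely the scalar $\tau$ computed above. Therefore $(H_{\pi\otimes\nu}\otimes(\mathbb{C}^{N})^{\otimes n})^{\mathfrak{k}_{0},\mu}\cong (H_{\pi\otimes\nu}\otimes(\mathbb{C}^{N})^{\otimes n}\otimes\mathbb{1}_{\vartheta})^{\mathfrak{k}}$, and by integrating to the connected group $K$ (and using $K$-finiteness of the Harish-Chandra part) this equals the $K$-invariants.

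Next I would apply isomorphism $II$, which is Proposition \ref{psmod}(2): $\Res^{G}_{K}H_{\pi\otimes\nu}\cong \Ind^{K}_{M}W$, independent of $\nu$. Since $(\mathbb{C}^{N})^{\otimes n}$ and $\mathbb{1}_{\vartheta}$ are already $K$-modules, the restriction to $K$ of the full tensor product is $\Ind^{K}_{M}W\otimes (\mathbb{C}^{N})^{\otimes n}\otimes\mathbb{1}_{\vartheta}$. For isomorphism $III$, I apply Frobenius reciprocity in the form $(\Ind^{K}_{M}W\otimes U)^{K}\cong (W\otimes \Res^{K}_{M}U)^{M}$, with $U=(\mathbb{C}^{N})^{\otimes n}\otimes\mathbb{1}_{\vartheta}$, to land in $(W\otimes(\mathbb{C}^{N})^{\otimes n}\otimes\mathbb{1}_{\vartheta})^{M}$.

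Finally I would trace the concrete map. Under $I$ a vector $\sum f\otimes v$ becomes $\sum f\otimes v\otimes 1_{\vartheta}$; under $II$ the function $f\in H_{\pi\otimes\nu}$ is sent to $f|_{K}$, viewed as an element of $\Ind^{K}_{M}W$; under $III$, the standard Frobenius isomorphism evaluates an induced function at $e\in K$. Composing, $\sum f\otimes v\mapsto \sum f|_{K}(e)\otimes v\otimes 1_{\vartheta}$, matching the formula in the statement. The map is a vector space isomorphism because each of $I$, $II$, $III$ is, so the whole composition is.

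The only real subtlety, and the step I would treat most carefully, is isomorphism $II$: one must pass from the analytic principal series module $H_{\pi\otimes\nu}$ to its Harish-Chandra (i.e.\ $K$-finite) part before writing $\Res^{G}_{K}H_{\pi\otimes\nu}=\Ind^{K}_{M}W$, since the functor $F_{n,p,\mu}$ is defined for $(\mathfrak{g},K)$-modules and the equality $F_{n,p,\nu}((H_{\pi\otimes\nu})_{HC})=F_{n,p,\nu}(H_{\pi\otimes\nu})$ already noted in the text is what legitimizes evaluating at $e$. Once that is acknowledged, the remaining ingredients are all formal.
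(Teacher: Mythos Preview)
Your proposal is correct and follows essentially the same route as the paper: the paper establishes the lemma precisely by composing the three isomorphisms $I$, $II$, $III$ (preceded by the passage from $(\mathfrak{k}_{0},\mu)$-invariants to $K$-invariants via tensoring with $\mathbb{1}_{\vartheta}$), and then reads off the explicit formula by observing that the Frobenius reciprocity map is evaluation at $e$. Your labeling of the steps differs slightly from the paper's---what you call isomorphism $I$ is the paper's preliminary identification $(H_{\pi\otimes\nu}\otimes(\mathbb{C}^{N})^{\otimes n})^{\mathfrak{k}_{0},\mu}\cong (H_{\pi\otimes\nu}\otimes(\mathbb{C}^{N})^{\otimes n}\otimes\mathbb{1}_{\vartheta})^{K}$, whereas the paper's $I$ is the tautological restriction to $K$---but the content and order of the argument are the same.
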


Let 
$$\Psi=\{(\alpha_{1},\beta_{1},\ldots,\alpha_{p},\beta_{p},n_{0})|
\forall i, \alpha_{i}\geq 0, \beta_{i}\geq 0, n_{0}\geq 0, \sum_{i=1}^{p}(\alpha_{i}+\beta_{i})+n_{0}=n\}.
$$

From \eqref{decomp-C}, we have 
\begin{eqnarray*}
\Res^{G}_{M}(\mathbb{C}^{N})^{\otimes n}
=\bigoplus_{\Psi}\bigotimes_{i=1}^{p}V(\alpha_{i}+\beta_{i})
\otimes (V_{q-p})^{\otimes n_{0}},
\end{eqnarray*}
and so we have 
\begin{eqnarray*}
W\otimes \Res^{G}_{M}(\mathbb{C}^{N})^{\otimes n}
=\bigoplus_{\Psi}
\bigotimes_{i=1}^{p} V(n_{i}+\alpha_{i}+\beta_{i})\otimes 
\left(V(\xi)\otimes (V_{q-p})^{\otimes n_{0}}\right).
\end{eqnarray*}
Now let $\mathcal{S}_{n_{0}}$ be the subgroup of $\mathcal{S}_{n}$ generated by permutation of the elements in $(V_{q-p})^{\otimes n_{0}}=(\mathbb{C}^{q-p})^{\otimes n_{0}}\subset(\mathbb{C}^{N})^{\otimes n}$.
From the Schur-Weyl duality, we know that as a $(\mathfrak{gl}(q-p)\times \mathcal{S}_{n_{0}})$-module, we have
\begin{equation*}
V_{q-p}^{\otimes n_{0}}\cong \bigoplus_{\lambda\in P}V(\lambda)\otimes S^{\lambda}, 
\end{equation*}
where $P$ is the set of dominant weights appear in the $V_{q-p}^{\otimes n_{0}}$ as a $\mathfrak{gl}(q-p)$-module.

So we have 
\begin{eqnarray*}\label{decomposition}
&&(W\otimes \Res^{G}_{M}(\mathbb{C}^{N})^{\otimes n}\otimes\mathbb{1_{\vartheta}})^{M}\\
&=&\bigoplus_{\Psi;\lambda\in P}
\left(\bigotimes_{i=1}^{p}V(n_{i}+\alpha_{i}+\beta_{i})\otimes
V(\xi)\otimes V(\lambda)\otimes \mathbb{1}_{\vartheta}\right)^{M}\otimes S^{\lambda}.
\end{eqnarray*}

Since
$$\vartheta|_{M}=(\underbrace{-\mu(q-p)-2\tau,\ldots,-\mu(q-p)-2\tau}_{p \text{ times }}, \underbrace{\mu p-\tau,\ldots,\mu p-\tau}_{q-p \text{ times }} ),$$
the vector space $\bigotimes_{i=1}^{p}V(n_{i}+\alpha_{i}+\beta_{i})\otimes
V(\xi)\otimes V(\lambda)\otimes \mathbb{1}_{\vartheta}$ contains $M$-invariants if and only if 
\begin{enumerate}
\item $V(n_{i}+\alpha_{i}+\beta_{i}-\mu(q-p)-2\tau)$ are trivial modules for $i=1,\ldots, p$;
\item $V(\lambda)$ is the dual representation of $V(\xi_{1}+\mu p-\tau,\ldots,\xi_{q-p}+\mu p-\tau)$.
\end{enumerate}

Thus we have the following theorem.
\begin{theorem}\label{Condition}
Let $W$ be an irreducible representation of $M$ with the form \eqref{W}.
\begin{enumerate}
\item[(i)]$F_{n,p,\mu}(H_{\pi\otimes\nu})=(H_{\pi\otimes\nu}
\otimes(\mathbb{C}^{N})^{\otimes n})^{\mathfrak{k}_{0},\mu}\neq 0$ if and only if the parameters $n_{i}, \xi_{i}$ satisfy the following conditions:  
\begin{enumerate}
\item all $(n_{i}-\mu(q-p)-2\tau)$'s are non-positive integers;
\item $(\xi_{1}+\mu p-\tau,\ldots,\xi_{q-p}+\mu p-\tau)$  is a dominant weight for $U(q-p)$ with $\xi_{1}+\mu p-\tau\leq 0$.
\end{enumerate}

\item[(ii)] For generic parameters $n_{i}, \xi_{j}$ satisfying the conditions in (i), the dimension of the vector space $F_{n,p,\mu}(H_{\pi\otimes\nu})$ is  
\begin{equation*}
\frac{n!\prod_{i=1}^{p} 2^{|n_{i}-\mu(q-p)-2\tau|}}{\prod_{i=1}^{p}|n_{i}-\mu(q-p)-2\tau|!\prod_{k=1}^{|\xi^{\mu}|} h_{k}(\xi^{\mu})},
\end{equation*}
where $h_{k}(\xi^{\mu})$ is the hook length of the Young diagram with shape
$\xi^{\mu}=(-\xi_{q-p}-\mu p+\tau,\ldots, -\xi_{1}-\mu p+\tau)$.
\end{enumerate}
\end{theorem}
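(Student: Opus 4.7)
The plan is to start from Lemma \ref{isospace}, which reduces the dimension computation to that of the $M$-invariants in $W\otimes (\mathbb{C}^{N})^{\otimes n}\otimes \mathbb{1}_{\vartheta}$. Using \eqref{decomp-C}, I expand $(\mathbb{C}^{N})^{\otimes n}$ as the direct sum over functions $f\colon\{1,\ldots,n\}\to\{1,\ldots,2p,*\}$ that assign each tensor slot to one of the $2p$ one-dimensional pieces $V_{1}^{i}$ of $\tilde{M}$ or to the $V_{q-p}$ piece. For a given $f$, the corresponding summand, restricted to $M$ along the diagonal embedding $\Delta$, is $\bigotimes_{i} V(\alpha_{i}(f)+\beta_{i}(f))\otimes V_{q-p}^{\otimes n_{0}(f)}$ with $\alpha_{i}(f)=|f^{-1}(i)|$, $\beta_{i}(f)=|f^{-1}(p+i)|$, and $n_{0}(f)=|f^{-1}(*)|$. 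The problem then reduces to (a) identifying which $f$'s produce nonzero invariants and (b) summing the resulting dimensions.

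For (a), the $M$-invariants factor across the $U(1)^{p}$ and $U(q-p)$ blocks. On the $U(1)^{p}$ block, $\vartheta$ restricts to $-\mu(q-p)-2\tau$ on each factor, so $V(n_{i}+\alpha_{i}+\beta_{i})\otimes \mathbb{1}_{\vartheta}$ has a one-dimensional invariant exactly when $\alpha_{i}+\beta_{i}=m_{i}:=\mu(q-p)+2\tau-n_{i}$ and none otherwise; the requirement $m_{i}\geq 0$ gives condition (i)(a). On the $U(q-p)$ block, $\vartheta$ restricts to the character $\det^{\mu p-\tau}$, so I need the invariants in $V(\xi)\otimes V_{q-p}^{\otimes n_{0}}\otimes \mathbb{1}_{\det^{\mu p-\tau}}$; Schur--Weyl duality splits $V_{q-p}^{\otimes n_{0}}=\bigoplus_{\lambda}V(\lambda)\otimes S^{\lambda}$, and the invariant is nonzero only when $V(\lambda)^{*}\cong V(\xi)\otimes \mathbb{1}_{\det^{\mu p-\tau}}$. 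This forces $\lambda=\xi^{\mu}=(-\xi_{q-p}-\mu p+\tau,\ldots,-\xi_{1}-\mu p+\tau)$; the requirement that $\xi^{\mu}$ be an honest partition is exactly condition (i)(b), and in that case the $U(q-p)$-invariant has dimension $\dim S^{\xi^{\mu}}$.

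For (b), the identity $N\tau=\sum_{i} n_{i}+\sum_{i} \xi_{i}+n$ forces $\sum_{i} m_{i}+|\xi^{\mu}|=n$, so the constraints on $f$ are compatible. The number of admissible $f$ is
\[
\binom{n}{|\xi^{\mu}|,m_{1},\ldots,m_{p}}\prod_{i=1}^{p} 2^{m_{i}},
\]
obtained by first choosing the $|\xi^{\mu}|$ slots going to $V_{q-p}$, then partitioning the remaining $n-|\xi^{\mu}|$ slots into groups of sizes $m_{i}$, and finally, for each slot in the $i$th group, choosing between $V_{1}^{i}$ and $V_{1}^{p+i}$. Multiplying by $\dim S^{\xi^{\mu}}=|\xi^{\mu}|!/\prod_{k} h_{k}(\xi^{\mu})$ from the hook length formula and simplifying yields the formula in (ii), using $m_{i}=|n_{i}-\mu(q-p)-2\tau|$ by (i)(a).

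The only real difficulty is careful bookkeeping: one must index $(\mathbb{C}^{N})^{\otimes n}$ by \emph{ordered} functions $f$ so that the multinomial and the $2^{m_{i}}$ factors arise naturally rather than by the unordered tuples in $\Psi$, and one must remember that pulling the $\tilde{M}$-character back along $\Delta\colon M\hookrightarrow \tilde{M}$ collapses the two separate $U(1)$-weights $\alpha_{i}$ and $\beta_{i}$ into the single combined condition $\alpha_{i}+\beta_{i}=m_{i}$.
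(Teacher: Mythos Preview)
Your proof is correct and follows essentially the same route as the paper's: reduce via Lemma~\ref{isospace} to $M$-invariants, decompose $(\mathbb{C}^{N})^{\otimes n}$ using \eqref{decomp-C}, apply Schur--Weyl duality on the $V_{q-p}^{\otimes n_{0}}$ piece, read off the conditions, and multiply the multinomial count by $\dim S^{\xi^{\mu}}$. Your indexing by ordered functions $f$ is in fact a bit cleaner than the paper's indexing by the set $\Psi$ of tuples $(\alpha_{1},\beta_{1},\ldots,\alpha_{p},\beta_{p},n_{0})$, since it makes the appearance of the multinomial coefficient and the $2^{m_{i}}$ factors transparent rather than hidden inside the multiplicity $C^{\mu}_{n_{1},\ldots,n_{p}}$.
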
 

\begin{proof}
It is easy to see (i). Now we prove (ii).
We know that 
$\dim S^{\xi^{\mu}}=d_{\xi}=\dfrac{|\xi^{\mu}|!}{\prod_{k} h_{k}(\xi^{\mu})}$, where $h_{k}$ is the hook length of the Young diagram
with the shape $\xi^{\mu}$ at position $k$.
Define 
$$C_{n_{1},\ldots,n_{p}}^{\mu}=\#\{(\alpha_{1},\ldots, \beta_{p},n_{0})\in \Psi|
\alpha_{i}+\beta_{i}=-n_{i}+\mu(q-p)+2\tau, \forall i=1,\ldots,p\},$$
and we have
$$C_{n_{1},\ldots,n_{p}}^{\mu}=\frac{n!\Pi_{i=1}^{p}2^{\alpha_{i}+\beta_{i}}}{\Pi_{i=1}^{p}(\alpha_{i}+\beta_{i})!n_{0}!}.$$

So
\begin{eqnarray*}
\dim F_{n,p,\mu}(H_{\pi\otimes\nu})
=C_{n_{1},\ldots,n_{p}}^{\mu}\dim S^{\xi^{\mu}}
=\frac{n!\Pi_{i=1}^{p} 2^{ (-n_{i}+\mu(q-p)+2\tau)}}{\Pi_{i=1}^{p} (-n_{i}+\mu(q-p)+2\tau)!\Pi_{k=1}^{|\xi^{\mu}|} h_{k}(\xi^{\mu})}.
\end{eqnarray*}
\end{proof}

From now on, we suppose the parameters $n_{i}, \xi_{j}$ for the $M$-module $W$ are generic and satisfy the conditions in theorem \ref{Condition} (i). 




\subsection{Operator $\tilde{y}_{k}$}
We use the notation in section \ref{U(p,q)}.
From the Iwasawa decomposition, we have that for any element $X\in \mathfrak{g}_{\mathbb{R}}$, 
$X=X_{\mathfrak{k}_{\mathbb{R}}}+X_{\mathfrak{a}}+X_{\mathfrak{n}_{+}}$ which corresponding to the decomposition $\mathfrak{g}_{\mathbb{R}}=\mathfrak{k}_{\mathbb{R}}+\mathfrak{a}+\mathfrak{n}_{+}$ where $\mathfrak{n}_{+}$ is defined in the remark \ref{posroot}.
For any $g\in G$ and $f\in H_{\pi\otimes \nu}$, $g(f)(x)=f(g^{-1}x)$ 
and from lemma \ref{isospace}, the vector $\sum g(f)\otimes v\in F_{n,p,\mu}(H_{\pi\otimes \nu})$ is uniquely determined by \linebreak
$\sum g(f)|_{K}(e)\otimes v=\sum f(g^{-1})\otimes v$.
Thus if we consider the Lie algebra action, we have that
$\sum X(f)\otimes v\in F_{n,p,\mu}(H_{\pi\otimes \nu})$ is uniquely determined by\linebreak 
$\sum (X_{\mathfrak{k}_{\mathbb{R}}}+X_{\mathfrak{a}})(f)|_{K}(e)\otimes v$.

Notice that, for $E_{i,j}\in \kg$, we have:
$$
E_{i,j}=\frac{1}{2}\left((E_{i,j}+E_{j,i})-\sqrt{-1}(\sqrt{-1} E_{i,j}-\sqrt{-1} E_{j,i})\right).
$$
From table 1, we have the following identities:
\begin{eqnarray*}
1\leq i<j\leq p,\quad &&E_{i,p+j}+E_{p+j,i}
=\frac{1}{2}(v^{4}_{i,j}-v^{2}_{i,j})+(E_{p+i,p+j}-E_{p+j,p+i}),\\
&&\sqrt{-1} (E_{i,p+j}- E_{p+j,i})
=\frac{1}{2}(w^{4}_{i,j}-w^{2}_{i,j})+\sqrt{-1}(E_{p+i,p+j}+E_{p+j,p+i});\\
1\leq j<i\leq p, \quad&&
E_{i,p+j}+E_{p+j,i}
=-\frac{1}{2}(v^{4}_{j,i}+v^{2}_{j,i})+(E_{j,i}-E_{i,j}),\\
&&\sqrt{-1}(E_{i,p+j}-E_{p+j,i})
=\frac{1}{2}(w^{4}_{j,i}+w^{2}_{j,i})-\sqrt{-1}(E_{j,i}+E_{i,j});\\
1\leq i\leq p<j, \quad&&
E_{i,p+j}+E_{p+j,i}
= -v^{6}_{i,j}+(E_{p+i,p+j}-E_{p+j,p+i}),\\
&&\sqrt{-1}(E_{i,p+j}-E_{p+j,i})
= -w^{6}_{i,j}+\sqrt{-1}(E_{p+i,p+j}+E_{p+j,p+i});\\
1\leq i=j\leq p,\quad&&
E_{i,p+i}+E_{p+i,i}
= E_{i,p+i}+E_{p+i,i},\\
&&\sqrt{-1}(E_{i,p+i}-E_{p+i,i})
=w_{i}^{7}+\sqrt{-1}(E_{p+i,p+i}-E_{i,i}).
\end{eqnarray*}
Here we choose positive root as in the remark \ref{posroot}. Since $\mathfrak{n}_{+}$ acts on the principal series module by zero, we have
as operators on $H_{\pi\otimes\nu}$,
\begin{equation*}
E_{i,p+j}
=\left\{\begin{array}{cc}
E_{p+i,p+j}, & 1\leq i<j\leq p ;\\
-E_{i,j}, & 1\leq j<i\leq p ;\\
E_{p+i,p+j}, &  1\leq i\leq p<j.
\end{array}\right.
\quad
E_{p+j,i}
=\left\{\begin{array}{cc}
-E_{p+j,p+i}, & 1\leq i<j\leq p ;\\
E_{j,i}, & 1\leq j<i\leq p ;\\
-E_{p+j,p+i}. &  1\leq i\leq p<j.
\end{array}\right.
\end{equation*}
\begin{eqnarray*}
E_{i,p+i}&=&\frac{1}{2}(E_{i,p+i}+E_{p+i,i})+\frac{1}{2}(E_{p+i,p+i}-E_{ii}),\\
E_{p+i,i}&=&\frac{1}{2}(E_{i,p+i}+E_{p+i,i})-\frac{1}{2}(E_{p+i,p+i}-E_{ii}),  1\leq i \leq p.
\end{eqnarray*}

So as operators on $F_{n,p,\mu}(H_{\pi\otimes\nu})$, we have
\begin{eqnarray*}
\tilde{y}_{k}
&=&-\sum_{i|j}E_{i,j}\otimes (E_{j,i})_{k}\\
&=& -\sum_{1\leq i<j\leq p}E_{p+i,p+j}\otimes (E_{p+j,i})_{k}+\sum_{1\leq i<j\leq p}E_{p+j,p+i}\otimes (E_{i,p+j})_{k}\\
&&\quad
+\sum_{1\leq j<i\leq p}E_{i,j}\otimes (E_{p+j,i})_{k}-\sum_{1\leq j<i\leq p}E_{j,i}\otimes (E_{i,p+j})_{k}\\
&&\quad 
-\sum_{1\leq i\leq p<j}E_{p+i,p+j}\otimes (E_{p+j,i})_{k}
+\sum_{1\leq i\leq p<j}E_{p+j,p+i}\otimes (E_{i,p+j})_{k}\\
&&-\sum_{1\leq i\leq p}\frac{1}{2}(E_{i,p+i}+E_{p+i,i})\otimes (E_{p+i,i})_{k}-\sum_{1\leq i\leq p}\frac{1}{2}(E_{p+i,p+i}-E_{i,i})\otimes (E_{p+i,i})_{k}\\
&&
-\sum_{1\leq i\leq p}\frac{1}{2}(E_{i,p+i}+E_{p+i,i})\otimes (E_{i,p+i})_{k}+\sum_{1\leq i\leq p}\frac{1}{2}(E_{p+i,p+i}-E_{i,i})\otimes (E_{i,p+i})_{k}.
\end{eqnarray*}

Then using the invariant property, we have
(as operators on $F_{n,p,\mu}(H_{\pi\otimes\nu})$)
\begin{eqnarray*}
\tilde{y}_{k}
&=& -\sum_{1\leq i<j\leq p}1\otimes (E_{i,p+i})_{k}-
\sum_{1\leq j<i\leq p}1\otimes (E_{p+j,j})_{k}-\sum_{1\leq i\leq p<j}1\otimes (E_{i,p+i})_{k}\\
&&\quad 
-\sum_{1\leq i\leq p}\rho_{i}\otimes (E_{p+i,i})_{k}
-\sum_{1\leq i\leq p}\rho_{i}\otimes (E_{i,p+i})_{k}
-\sum_{1\leq i\leq p}\frac{1}{2}\otimes (E_{p+i,i})_{k}\\
&&\quad
-\sum_{1\leq i\leq p}\frac{1}{2}\otimes (E_{i,p+i})_{k}+\frac{\mu(p+q)}{2}\sum_{1\leq i\leq p}1\otimes (E_{p+i,i})_{k}
-\frac{\mu(p+q)}{2}\sum_{1\leq i\leq p}1\otimes (E_{i,p+i})_{k}
\\
&&\quad
-\sum_{k\neq l}\left(
-\sum_{1\leq i<j\leq p}(E_{p+i,p+j})_{l}\otimes (E_{p+j,i})_{k}+\sum_{1\leq i<j\leq p}(E_{p+j,p+i})_{l}\otimes (E_{i,p+j})_{k}+\right.\\
&&\quad
\sum_{1\leq j<i\leq p}(E_{i,j})_{l}\otimes (E_{p+j,i})_{k}-\sum_{1\leq j<i\leq p}(E_{j,i})_{l}\otimes (E_{i,p+j})_{k}\\
&&\quad 
-\sum_{1\leq i\leq p<j}(E_{p+i,p+j})_{l}\otimes (E_{p+j,i})_{k}
+\sum_{1\leq i\leq p<j}(E_{p+j,p+i})_{l}\otimes (E_{i,p+j})_{k}+\\
&&\quad 
-\sum_{1\leq i\leq p}\frac{1}{2}(E_{p+i,p+i})_{l}\otimes (E_{p+i,i})_{k}
+\sum_{1\leq i\leq p}\frac{1}{2}(E_{i,i})_{l}\otimes (E_{p+i,i})_{k}\\
&&\quad\left.
+\sum_{1\leq i\leq p}\frac{1}{2}(E_{p+i,p+i})_{l}\otimes (E_{i,p+i})_{k}
-\sum_{1\leq i\leq p}\frac{1}{2}(E_{i,i})_{l}\otimes (E_{i,p+i})_{k}\right).
\end{eqnarray*}
Here $\rho_{i}=\dfrac{\nu+\rho}{2}(E_{p+i,i}+E_{i,p+i})$.

When $q>p$, we have
\begin{eqnarray*}
\rho&=&\frac{1}{2}\sum_{\alpha\in \Delta^{\mathrm{res}}_{+}}\alpha
=-\sum_{1\leq i\leq p}(p+q-2i+1)a_{i}.
\end{eqnarray*}
Then
\begin{equation*}
\rho_{i}=-\frac{p+q}{2}+i-\frac{1}{2}+\frac{\nu_{i}}{2}.
\end{equation*}

So as operators on $F_{n,p,\mu}(H_{\pi\otimes\nu})$,
\begin{eqnarray*}
&&\tilde{y}_{k}\\
&=& \sum_{1\leq i\leq p}(\frac{p-q}{2}-\frac{\nu_{i}}{2}-\frac{\mu (p+q)}{2})\otimes (E_{i,p+i})_{k}+\sum_{1\leq i\leq p}(\frac{q-p}{2}-\frac{\nu_{i}}{2}+\frac{\mu(p+q)}{2})\otimes (E_{p+i,i})_{k}\\
&&\quad
-\sum_{k\neq l}\left(
-\sum_{1\leq i<j\leq p}(E_{p+i,p+j})_{l}\otimes (E_{p+j,i})_{k}+\sum_{1\leq i<j\leq p}(E_{p+j,p+i})_{l}\otimes (E_{i,p+j})_{k}\right.\\
&&\quad+\sum_{1\leq j<i\leq p}(E_{i,j})_{l}\otimes (E_{p+j,i})_{k}-\sum_{1\leq j<i\leq p}(E_{j,i})_{l}\otimes (E_{i,j+p})_{k}\\
&&\quad 
-\sum_{1\leq i\leq p<j}(E_{p+i,p+j})_{l}\otimes (E_{p+j,i})_{k}
+\sum_{1\leq i\leq p<j}(E_{p+j,p+i})_{l}\otimes (E_{i,p+j})_{k}+\\
&&\quad 
-\sum_{1\leq i\leq p}\frac{1}{2}(E_{p+i,p+i})_{l}\otimes (E_{p+i,i})_{k}
+\sum_{1\leq i\leq p}\frac{1}{2}(E_{i,i})_{l}\otimes (E_{p+i,i})_{k}\\
&&\quad\left.
+\sum_{1\leq i\leq p}\frac{1}{2}(E_{p+i,p+i})_{l}\otimes (E_{i,p+i})_{k}
-\sum_{1\leq i\leq p}\frac{1}{2}(E_{i,i})_{l}\otimes (E_{i,p+i})_{k}\right).
\end{eqnarray*}

When $q=p$, we have
\begin{equation*}
\rho=\sum_{1\leq i\leq p}(-2p+2i-1)a_{i}.
\end{equation*}
Then 
\begin{equation*}
\rho_{i}=-p+i-\frac{1}{2}+\frac{\nu_{i}}{2}.
\end{equation*}
So as operators on $F_{n,p,\mu}(H_{\pi\otimes\nu})$,
\begin{eqnarray*}
&&\tilde{y}_{k}\\
&=& \sum_{1\leq i\leq p}(-\frac{\nu_{i}}{2}-\mu p)\otimes (E_{i,p+i})_{k}+\sum_{1\leq i\leq p}(-\frac{\nu_{i}}{2}+\mu p)\otimes (E_{p+i,i})_{k}\\
&&\quad
-\sum_{k\neq l}\left(
-\sum_{1\leq i<j\leq p}(E_{p+i,p+j})_{l}\otimes (E_{p+j,i})_{k}+\sum_{1\leq i<j\leq p}(E_{p+j,p+i})_{l}\otimes (E_{i,p+j})_{k}
\right.\\
&&\quad+\sum_{1\leq j<i\leq p}(E_{i,j})_{l}\otimes (E_{p+j,i})_{k}-\sum_{1\leq j<i\leq p}(E_{j,i})_{l}\otimes (E_{i,j+p})_{k}\\
&&\quad 
-\sum_{1\leq i\leq p}\frac{1}{2}(E_{p+i,p+i})_{l}\otimes (E_{p+i,i})_{k}
+\sum_{1\leq i\leq p}\frac{1}{2}(E_{i,i})_{l}\otimes (E_{p+i,i})_{k}
\\
&&\quad 
\left.
+\sum_{1\leq i\leq p}\frac{1}{2}(E_{p+i,p+i})_{l}\otimes (E_{i,p+i})_{k}
-\sum_{1\leq i\leq p}\frac{1}{2}(E_{i,i})_{l}\otimes (E_{i,p+i})_{k}\right).
\end{eqnarray*}

\subsection{Common eigenvectors for $y_{k}$}\label{sect-comm}
Let $W=V(n_{1})\otimes\cdots\otimes V(n_{p})\otimes V(\xi)$ be an irreducible module for $M$
with generic parameters $n_{i},\xi=(\xi_{1},\ldots,\xi_{q-p})$ satisfying the condition in theorem \ref{Condition}. 

Let $n_{i}^{\mu}=-n_{i}+\mu(q-p)+2\tau$, and
$n^{\mu}=(n_{1}^{\mu},\ldots, n^{\mu}_{p})$.
Let $\xi^{\mu}_{i}=-\xi_{q-p-i+1}-\mu p+\tau$, and 
$\xi^{\mu}=(\xi^{\mu}_{1},\ldots,\xi^{\mu}_{q-p})$. 
Let $n^{\mu}_{\xi}=\sum\xi_{i}^{\mu}=-\sum \xi_{i}-\mu p(q-p)+(q-p)\tau$. Notice that all $n_{i}^{\mu}$ and $\xi_{i}^{\mu}$ are nonnegative integers and we have
$\sum_{i=1}^{p}n_{i}^{\mu}+n_{\xi}^{\mu}=n$.
Rewrite $(\mathbb{C}^{N})^{\otimes n}$ as 
$(\mathbb{C}^{N})^{\otimes n_{1}^{\mu}}
\otimes \cdots \otimes (\mathbb{C}^{N})^{\otimes n_{p}^{\mu}}
\otimes  (\mathbb{C}^{N})^{\otimes n_{\xi}^{\mu}}$.
Let $\mathcal{S}_{n_{i}^{\mu}}$ be the subgroup of
$\mathcal{S}_{n}$ which is generated by permutations 
of the tensor factors in $(\mathbb{C}^{N})^{\otimes n_{i}^{\mu}}$.
Let $\mathcal{S}_{n_{\xi}^{\mu}}$ be the subgroup of
$\mathcal{S}_{n}$ which is generated by permutations 
of the tensor factors in
$(\mathbb{C}^{N})^{\otimes n_{\xi}^{\mu}}$.

From section \ref{dimofimage}, we know that any vector in $F_{n,p,\mu}(H_{\pi\otimes\nu})$ has the form:
\begin{equation*}
\sum_{i} \omega^{i}\otimes u_{1}^{i}
\otimes \cdots\otimes u_{p}^{i}\otimes w^{i},
\end{equation*}
where $\omega^{i}\in W\otimes \mathbb{1}_{\vartheta}$,
$u^{i}_{j}\in V(n_{j}^{\mu})\subset 
(\mathbb{C}^{N})^{\otimes n}$, and 
$w^{i}\in V(\xi^{\mu})\otimes S^{\xi^{\mu}}\subset (\mathbb{C}^{N})^{\otimes n}$.

Now choose  a nonzero vector $\omega\in W\otimes \mathbb{1}_{\vartheta}$ with the highest weight, and a nonzero vector $w_{\xi^{\mu}}\in V(\xi^{\mu})\subset 
(\mathbb{C}^{N})^{\otimes n_{\xi}^{\mu}}$
of weight $\xi^{\mu}$. 

For $k=1,\ldots, p$, define
\begin{equation*} 
u_{k}
=\underbrace{(e_{k}- e_{k+p})\otimes\cdots\otimes (e_{k}- e_{k+p})}_{n^{\mu}_{k} \text{ times}}\in (\mathbb{C}^{N})^{\otimes n^{\mu}_{k}},
\end{equation*}
where $e_{i}$ is the vector in $\mathbb{C}^{N}$ with $1$ on the $i$-th position and $0$ elsewhere. It is easy to see that $u_{k}\in V(n_{i}^{\mu})$.

Define 
\begin{equation}{\label{eqn-eigenvector} }
\varpi_{s}=\omega\otimes u_1\otimes\cdots\otimes u_p\otimes (w_{\xi^{\mu}}\otimes \hat{w}_{s}),
\text{ for } s=1,\ldots, d_{\xi^{\mu}},
\end{equation}
where $\hat{w}_{s}$, as in lemma \ref{eigenvector}, are basis for the Specht module $S^{\xi^{\mu}}$ of $\mathcal{S}_{n_{\xi}^{\mu}}$ and are common eigenvectors of $\hat{L}_{i}$ with eigenvalues $\hat{\alpha}_{i,s}$. 

We have the following properties:
\begin{proposition}\label{prop}
\begin{enumerate}
\item[(i)] $\varpi_{s}$ is invariant under the action of $\mathcal{S}_{n^{\mu}_1}\times\cdots\times \mathcal{S}_{n^{\mu}_p}\subset \mathcal{S}_n$.
\item[(ii)]Under the action of $\mathcal{S}_{n^{\mu}_{\xi}}\subset \mathcal{S}_{n}$, $\varpi_{s}$ generates $S^{\xi^{\mu}}$. 
\item[(iii)]The subgroup of $\Gamma$, $1\times\cdots\times 1\times\mathbb{Z}_{2}^{n^{\mu}_{\xi}}$ acts as $-1$. 
\item[(iv)] Under the action of $\mathcal{S}_{n}$, $\{\varpi_{i}\}$ generate $F_{n,p,\mu}(H_{\pi\otimes\nu})$.
\end{enumerate}
\end{proposition}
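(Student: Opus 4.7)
Parts (i)--(iii) follow directly from the construction of $\varpi_s$. For (i), the tensor $u_k = (e_k - e_{k+p})^{\otimes n_k^\mu}$ has the same vector in each of its $n_k^\mu$ slots and is therefore fixed by $\mathcal{S}_{n_k^\mu}$, while the factors $\omega$, $w_{\xi^\mu}$, $\hat{w}_s$ occupy disjoint slots. For (ii), the Schur--Weyl decomposition $V_{q-p}^{\otimes n_\xi^\mu} = \bigoplus_\lambda V(\lambda) \otimes S^\lambda$ is $U(q-p) \times \mathcal{S}_{n_\xi^\mu}$-equivariant, so $\mathcal{S}_{n_\xi^\mu}$ acts trivially on the $V(\xi^\mu)$-factor and by the Specht representation on $S^{\xi^\mu}$; hence $\sigma \cdot \varpi_s = \omega \otimes u_1 \otimes \cdots \otimes u_p \otimes w_{\xi^\mu} \otimes (\sigma \cdot \hat{w}_s)$, and the orbit of the nonzero $\hat{w}_s$ spans the irreducible $S^{\xi^\mu}$. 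For (iii), the last $n_\xi^\mu$ tensor factors lie in $V_{q-p} = \mathrm{span}(e_{2p+1}, \ldots, e_N)$, on which $J = \mathrm{diag}(I_p, -I_q)$ acts as $-I$, so each $\gamma_i$ with $i$ in this range multiplies $\varpi_s$ by $-1$.

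For (iv), my plan is a dimension count against the explicit basis of the $M$-invariant space built in the proof of Theorem \ref{Condition}. That basis is parametrized by three independent choices: (a) a partition of the $n$ tensor positions into blocks of sizes $n_1^\mu, \ldots, n_p^\mu, n_\xi^\mu$, contributing $\binom{n}{n_1^\mu, \ldots, n_\xi^\mu}$; (b) at each slot in block $k$, a choice of $e_k$ or $e_{k+p}$, contributing $\prod_k 2^{n_k^\mu}$; and (c) a Specht basis vector for $S^{\xi^\mu}$ in the last block, contributing $\dim S^{\xi^\mu}$. The product equals $\dim F_{n,p,\mu}(H_{\pi\otimes\nu})$. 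I will show that the Weyl-group orbit of $\{\varpi_s\}_{s=1}^{d_{\xi^\mu}}$ realizes each of (a), (b), (c) independently: permutations $\sigma \in \mathcal{S}_n$ sweep out (a), modulo the stabilizer from (i); applying subsets of $\gamma_i$'s inside block $k$ converts $e_k - e_{k+p}$ into $e_k + e_{k+p}$, and since $\{e_k + e_{k+p},\, e_k - e_{k+p}\}$ is a basis of $\mathrm{span}(e_k, e_{k+p})$, the $2^{n_k^\mu}$ subsets produce a full basis in block $k$, giving (b); varying $s$ together with acting by $\mathcal{S}_{n_\xi^\mu}$ sweeps out (c) by (ii).

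The main step to verify is that these three sets of variations remain independent when applied simultaneously. I would argue this by reducing to each degree of freedom in turn: vectors from distinct block partitions live in disjoint standard-tensor supports and so are automatically linearly independent; within a fixed partition, the sign-basis change in each block $k$ is invertible because $\{e_k + e_{k+p},\, e_k - e_{k+p}\}$ is a genuine basis of $\mathrm{span}(e_k, e_{k+p})$; and the Specht variations span a distinguished irreducible $\mathcal{S}_{n_\xi^\mu}$-submodule in the $\xi^\mu$-isotype. Multiplying the three cardinalities recovers $\dim F_{n,p,\mu}(H_{\pi\otimes\nu})$, so the generated subspace is everything. A small caveat: (iv) as stated refers only to $\mathcal{S}_n$, but the sign variations in step (b) require the $\gamma_i$'s, so the natural reading, consistent with (iii), is that (iv) describes generation as a module over the full Weyl group $\mathcal{W}_{BC_n}$, or equivalently as a module over the degenerate affine Hecke algebra $\mathcal{H}_n(\kappa_1, \kappa_2)$.
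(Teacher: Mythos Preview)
Your argument is correct and matches the paper's: parts (i)--(iii) are dispatched identically, and (iv) is the same dimension count against Theorem~\ref{Condition}, with the paper organizing it as the span of each $\varpi_k$ under $\mathcal{W}_{BC_n}/\mathcal{S}_{n_\xi^\mu}$ (giving the factor $n!\prod 2^{n_i^\mu}/\prod n_i^\mu!\, n_\xi^\mu!$), times $d_{\xi^\mu}$ coming from the linearly independent $\hat{w}_s$. Your caveat is well taken---the paper's own proof of (iv) uses the full Weyl group $\mathcal{W}_{BC_n}$ (the $\gamma_i$'s are what produce the factors $2^{n_i^\mu}$), so the statement's reference to $\mathcal{S}_n$ should indeed be read as $\mathcal{W}_{BC_n}$.
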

\begin{proof}
From the construction of $u_{i}$, we can get (i) easily.
Since $\forall \sigma\in \mathcal{S}_{n_{\xi}^{\mu}}$ we have 
$$\sigma(\varpi_{s})=\omega\otimes u_1\otimes\cdots\otimes u_p\otimes w_{\xi^{\mu}}\otimes \sigma(\hat{w}_{s}),$$ we get (ii).

From the construction, $w_{\xi^{\mu}}\in V(\xi^{\mu})\subset (\mathbb{C}^{N})^{\otimes n_{\xi}^{\mu}}$. So $w_{\xi^{\mu}}\in \Span\{e_{i}|i>2p\}$ which implies (iii).
 
We can see that under the action of $\mathcal{W}_{BC_{n}}/S_{n_{\xi}^{\mu}}$, $\varpi_{k}$ generates a vector space with dimension $\dfrac{n!\Pi_{i=1}^{p}2^{n_{i}^{\mu}}}{\Pi_{i=1}^{p}n_{i}^{\mu}!n_{\xi}^{\mu}!}$. Two vector spaces generated by different $\varpi_{i}$ and $\varpi_{j}$ do not intersect since 
the last component $\hat{w}_{i}$ and $\hat{w}_{j}$ are linearly independent.
So the total dimension of the space generated by $\{\varpi_{i}\}$ under the action of $\mathcal{W}_{BC_{n}}$ will be
$$\dfrac{n!\Pi_{i=1}^{p}2^{n_{i}^{\mu}}}{\Pi_{i=1}^{p}n_{i}^{\mu}!n_{\xi}^{\mu}!}\cdot d_{\xi^{\mu}}
=\dfrac{n!\Pi_{i=1}^{p}2^{n_{i}^{\mu}}}{\Pi_{i=1}^{p}n_{i}^{\mu}!\Pi_{i=1}^{n_{\xi}^{\mu}}h_{k}(\xi^{\mu})},$$
which equals to the dimension of $F_{n,p,\mu}(H_{\pi\otimes\nu})$
we find in theorem \ref{Condition}.
This proves (iv).
\end{proof}
\begin{remark}
All the above discussions hold for $q\geq p$. In fact, when $q=p$, $n_{\xi}^{\mu}=0$ and we can still do the above construction of $\varpi_{s}$. The only difference is that we do not have the $V(\xi)\otimes S^{\xi^{\mu}}$ part, which makes things much simpler.  
\end{remark}

For $r=1,\ldots, p$, let $m_{r}=\sum_{i=1}^{r}n^{\mu}_{i}$ and $m_{0}=0, m_{p+1}=n$. Notice that when $q=p$, we have $m_{p}=m_{p+1}=n$.

\begin{theorem}\label{common-eigenvector}
The vector $\varpi_{s}$ defined by \eqref{eqn-eigenvector} is a common eigenvector of the operators $y_{k}$, for $k=1,\ldots,n$, $s=1,\ldots,d_{\xi^{\mu}}$. When $q>p$, the eigenvalue of 
$y_{k}$ on $\varpi_{s}$ is
\begin{equation*}
\lambda_{k,s}=\left\{\begin{array}{cc}\frac{\nu_{r}}{2}+\frac{m_{r}+m_{r-1}}{2}-k+\frac{1}{2}, &\text{for } m_{r-1}<k\leq m_{r}, \text{ where }r\leq p; \\ &  \\-\frac{p-q-\mu(p+q)}{2}+\hat{\alpha}_{k-m_{p}+1,s}, &\text{for } m_{p}<k\leq m_{p+1}. \end{array}\right.
\end{equation*}
When $q=p$, the eigenvalue of $y_{k}$ on $\varpi_{s}$ is 
\begin{equation*}
\lambda_{k,s}=\frac{\nu_{r}}{2}+\frac{m_{r}+m_{r-1}}{2}-k+\frac{1}{2}, \text{ for } m_{r-1}<k\leq m_{r}.
\end{equation*}
\end{theorem}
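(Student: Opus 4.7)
The plan is to compute $y_k\varpi_s$ directly from the formula of Theorem \ref{aff},
\begin{equation*}
y_k = \tilde{y}_k + \tfrac{p-q-\mu N}{2}\gamma_k + \tfrac{1}{2}\sum_{k'>k}S_{kk'} - \tfrac{1}{2}\sum_{k'<k}S_{kk'} + \tfrac{1}{2}\sum_{k'\neq k}S_{kk'}\gamma_k\gamma_{k'},
\end{equation*}
using the explicit expansion of $\tilde{y}_k$ computed in the preceding subsection. I split $\tilde{y}_k$ into its one-factor part (the summands acting only on the $k$-th tensor factor of $(\mathbb{C}^N)^{\otimes n}$) and its two-factor part (the sum over $l\neq k$), and I exploit the $M$-invariance of $\varpi_s$ together with the fact that $\omega$ is a highest-weight vector of $W\otimes\mathbb{1}_\vartheta$.

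The first step is to pair the one-factor part of $\tilde{y}_k$ with $\tfrac{p-q-\mu N}{2}\gamma_k$. For $m_{r-1}<k\leq m_r$ with $r\leq p$, the $k$-th factor is $(e_r-e_{r+p})$, and a short computation using $E_{i,p+i}(e_r-e_{r+p})=-\delta_{i,r}e_r$, $E_{p+i,i}(e_r-e_{r+p})=\delta_{i,r}e_{p+r}$, and $J(e_r-e_{r+p})=e_r+e_{r+p}$ shows that this combination multiplies the $k$-th factor by the scalar $\tfrac{\nu_r}{2}$, so that the whole contribution to $y_k\varpi_s$ is $\tfrac{\nu_r}{2}\varpi_s$. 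For $k>m_p$ the $k$-th factor is $e_a$ with $a>2p$; the one-factor operators annihilate it and $\gamma_k$ acts by $-1$, contributing the scalar $-\tfrac{p-q-\mu(p+q)}{2}$. These supply the first summand of $\lambda_{k,s}$ in each case.

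The second step is to group the Weyl-group terms with the two-factor part of $\tilde{y}_k$, split by the block containing the index $k'$ (or the summation index $l$). When $k'$ lies in the same $u_r$-block as $k$, Proposition \ref{prop}(i) gives $S_{kk'}\varpi_s=\varpi_s$; the twisted term $\tfrac{1}{2}S_{kk'}\gamma_k\gamma_{k'}\varpi_s$ cancels exactly against the $i=r$ half-term summands of the two-factor $\tilde{y}_k$ at $l=k'$, and summing the surviving $\pm\tfrac{1}{2}\varpi_s$ over $k'$ in the block yields the scalar $\tfrac{m_r+m_{r-1}}{2}-k+\tfrac{1}{2}$ times $\varpi_s$. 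When $k'$ lies in a different $u_{r'}$-block, or when exactly one of $k,k'$ lies in the Specht block, a case analysis using $J(e_{r'}-e_{r'+p})=e_{r'}+e_{r'+p}$ and $Je_a=-e_a$ for $a>p$ shows that the two or three surviving summands of the two-factor $\tilde{y}_k$ at $l=k'$ exactly cancel the net contribution $\tfrac{1}{2}S_{kk'}\pm\tfrac{1}{2}S_{kk'}\gamma_k\gamma_{k'}\varpi_s$. When both $k,k'>m_p$, Proposition \ref{prop}(iii) yields $\gamma_k\gamma_{k'}\varpi_s=\varpi_s$, so the pair of Weyl terms collapses to $S_{kk'}$ for $k'>k$ and to $0$ for $k'<k$, the two-factor $\tilde{y}_k$ contribution at such $l$ vanishes (no matrix unit in the big bracket sends both Specht-block factors back into the Specht subspace), and the sum $\sum_{k'>k}S_{kk'}$ acts on $\hat{w}_s$ as the eigenvalue $\hat{\alpha}_{k-m_p,s}$ from Lemma \ref{eigenvector}.

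The chief obstacle is the cross-block cancellation in the second step: the two-factor part of $\tilde{y}_k$ contains roughly ten summands, and for each ordered pair of distinct blocks one must identify which summands survive on $\varpi_s$ and then verify that their coefficients and signs line up precisely with those of $\tfrac{1}{2}(S_{kk'}\pm S_{kk'}\gamma_k\gamma_{k'})$. No conceptual input is required beyond the explicit action of the matrix units, but the book-keeping is delicate, and it is exactly here that the choice $u_k=(e_k-e_{k+p})^{\otimes n_k^\mu}$ (rather than, say, with a plus sign) turns out to be essential for the signs to match. Adding the scalars from the first step to the same-block and Specht-block eigenvalues from the second gives $\lambda_{k,s}$ as stated; the $q=p$ case is then obtained by discarding the Specht-block contributions throughout.
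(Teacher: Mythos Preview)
Your approach is essentially identical to the paper's: both split $\tilde{y}_k$ into its one-factor and two-factor pieces via the explicit expansion from the preceding subsection, pair the one-factor piece with $\tfrac{p-q-\mu N}{2}\gamma_k$ to produce the $\tfrac{\nu_r}{2}$ (resp.\ $-\tfrac{p-q-\mu(p+q)}{2}$) contribution, and then show block-by-block that each surviving two-factor summand equals a Weyl-group operator of the form $\tfrac{1}{2}(S_{kl}\pm S_{kl}\gamma_k\gamma_l)$, so that the cross-block terms cancel against the Weyl part of $y_k$ and only the same-block transpositions and the Specht-block $\hat{L}$-operator remain. The paper organises the bookkeeping slightly differently (it first rewrites all of $\tilde{y}_k(\varpi_s)$ as a combination of $S_{kl}$ and $S_{kl}\gamma_k\gamma_l$ and then adds the Weyl terms), but the computation is the same; one small slip to fix is your Specht-block index, which should read $\hat{\alpha}_{k-m_p+1,s}$ to match the statement (check the reindexing of $\hat{L}_i$ from Lemma~\ref{eigenvector} against the global position $k$).
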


In the rest of this section, we will prove this theorem. We divide the proof into the following cases corresponding to the range of the index $k$. 
 
\subsubsection{Case 1:~$m_{r-1}<k\leq m_{r}$, for $r\leq p$ and $q>p$.} 

From the construction of $\varpi$, we know that the $k$-th factor of the tensor product must be $e_{r}$ or $e_{r+p}$.
Then as an operator on $\varpi_{s}$, $\tilde{y}_{k}$ can be written as follows:
\begin{eqnarray*}
\tilde{y}_{k}
&=&(\frac{p-q}{2}-\frac{\nu_{r}}{2}-\frac{\mu(p+q)}{2})\otimes(E_{r,p+r})_{k}
+(\frac{q-p}{2}-\frac{\nu_{r}}{2}+\frac{\mu(p+q)}{2})\otimes(E_{p+r,r})_{k}\\
&&\quad 
+\mathop{\sum_{l\neq k}}_{m_{r-1} < l\leq m_{r}}\left(\frac{\gamma_{l}}{2}\otimes(E_{r,p+r})_{k}-\frac{\gamma_{l}}{2}\otimes(E_{p+r,r})_{k}\right)\\
&&\quad
+\sum_{r<t\leq p}\sum_{m_{t-1}<l\leq m_{t}}\left((E_{p+r,p+t})_{l}\otimes (E_{p+t,r})_{k}+(E_{r,t})_{l}\otimes (E_{t,p+r})_{k}
\right)\\
&&\quad
-\sum_{t<r}\sum_{m_{t-1}<l\leq m_{t}}
\left((E_{p+r,p+t})_{l}\otimes (E_{t,p+r})_{k}+(E_{r,t})_{l}\otimes (E_{p+t,r})_{k}\right)\\
&&\quad
+\sum_{k\neq l}\left(\sum_{1\leq i\leq p<j}(E_{p+i,p+j})_{l}\otimes(E_{p+j,i})_{k}
-\sum_{1\leq i\leq p<j}(E_{p+j,p+i})_{l}\otimes(E_{i,p+j})_{k}\right).
\end{eqnarray*}

Notice that the vector $\varpi_{s}$ can be written as
$$\varpi_{s}=\omega\otimes(a\otimes (e_{r})_{k}\otimes b-a\otimes (e_{p+r})_{k}\otimes b)\otimes \hat{w}_{s}.$$
Then we have 
\begin{eqnarray*}
&&\mathop{\sum_{l\neq k}}_{m_{r-1} < l\leq m_{r}}\left(\frac{\gamma_{l}}{2}\otimes(E_{r,p+r})_{k}-\frac{\gamma_{l}}{2}\otimes(E_{p+r,r})_{k}\right)(\varpi_{s})\\
&=&\mathop{\sum_{l\neq k}}_{m_{r-1} < l\leq m_{r}}
\omega\otimes\frac{\gamma_{l}}{2}\left(a\otimes(-e_{r})_{k}\otimes b-a\otimes(e_{p+r})_{k}\otimes b\right)\otimes \hat{w}_{s}\\
&=&-\frac{1}{2}\mathop{\sum_{l\neq k}}_{m_{r-1} < l\leq m_{r}}  S_{lk}\gamma_{l}\gamma_{k}(\varpi_{s}).
\end{eqnarray*}

Similarly, for $m_{t-1}<l\leq m_{t}$, and $t<s$, we can write 
\begin{eqnarray*}
\varpi&=& \omega\otimes (a\otimes (e_{t})_{l}\otimes b\otimes (e_{r})_{k}\otimes c
-a\otimes (e_{t+p})_{l}\otimes b\otimes (e_{r})_{k}\otimes c\\
&&
-a\otimes (e_{t})_{l}\otimes b\otimes (e_{r+p})_{k}\otimes c
+a\otimes (e_{t+p})_{l}\otimes b\otimes (e_{r+p})_{k}\otimes c)\otimes \hat{w}_{s}.
\end{eqnarray*}

Then 
\begin{eqnarray*}
&&-\sum_{t<r}\sum_{m_{t-1}<l\leq m_{t}}
\left((E_{p+r,p+t})_{l}\otimes (E_{t,p+r})_{k}+(E_{r,t})_{l}\otimes (E_{p+t,r})_{k}\right)(\varpi_{s})\\
&=&-\sum_{t<r}\sum_{m_{t-1}<l\leq m_{t}}\omega\otimes(a\otimes (e_{r})_{l}\otimes b\otimes (e_{t+p})_{k}\otimes c\otimes \hat{w}_{s}
+a\otimes (e_{r+p})_{l}\otimes b\otimes (e_{t})_{k}\otimes c\otimes \hat{w}_{s})\\
&=&\sum_{t<r}\sum_{m_{t-1}<l\leq m_{t}}\frac{1}{2}(S_{kl}-S_{kl}\gamma_{k}\gamma_{l})(\varpi_{s}).
\end{eqnarray*}

By a similar method, we can show that
\begin{eqnarray*}
&&\sum_{r<t\leq p}\left(\sum_{m_{t-1}<l\leq m_{t}}
(E_{p+r,p+t})_{l}\otimes (E_{p+t,r})_{k}+(E_{r,t})_{l}\otimes (E_{t,p+r})_{k}\right)(\varpi_{s})\\
&=&-\sum_{r<t\leq p}\sum_{m_{t-1}<l\leq m_{t}}\frac{1}{2}(S_{kl}+S_{kl}\gamma_{k}\gamma_{l})(\varpi_{s}).
\end{eqnarray*}

Now consider the last term.
Since $p+j>2p$ , we have
\begin{eqnarray*}
&&\sum_{k\neq l}\left(\sum_{1\leq i\leq p<j}(E_{p+i,p+j})_{l}\otimes(E_{p+j,i})_{k}
-\sum_{1\leq i\leq p<j}(E_{p+j,p+i})_{l}\otimes(E_{i,p+j})_{k}\right)(\varpi_{s})\\
&=& \left(\sum_{m_{p}<l\leq m_{p+1}}\sum_{p<j}(E_{p+r,p+j})_{l}\otimes(E_{p+j,r})_{k}\right)(\varpi_{s}).
\end{eqnarray*}
Now suppose 
$w_{\xi^{\mu}}\otimes \hat{w}_{s}
=\sum c^{s}_{i_{1},\ldots,i_{n^{\mu}_{\xi}}}e_{i_{1}}
\otimes\cdots \otimes e_{i_{n^{\mu}_{\xi}}}$,
where by our construction, all the indices $i_{u}>2p$.
Then we can write 
\begin{eqnarray*}
\varpi_{s} & = &\omega\otimes a\otimes (e_{r})_{k}\otimes b\otimes (\sum c^{s}_{i_{1},\ldots,i_{n_{\xi}^{\mu}}}e_{i_{1}}\otimes\cdots \otimes e_{i_{n^{\mu}_{\xi}}})\\
&&\quad-\omega\otimes a\otimes (e_{r+p})_{k}\otimes b\otimes (\sum c^{s}_{i_{1},\ldots,i_{n^{\mu}_{\xi}}}e_{i_{1}}\otimes\cdots \otimes e_{i_{n^{\mu}_{\xi}}}).
\end{eqnarray*}

It is easy to see that

$$\sum_{m_{p}<l\leq m_{p+1}}\sum_{p<j}(E_{p+r,p+j})_{l}\otimes(E_{p+j,r})_{k}(\varpi_{s})
=-\sum_{m_{p}<l\leq m_{p+1}}\frac{1}{2}(S_{lk}+S_{lk}\gamma_{l}\gamma_{k})(\varpi_{s}).$$

From above discussion, we have
\begin{eqnarray*}
&&\tilde{y}_{k}(\varpi_{s})\\
&=&\left((\frac{p-q}{2}-\frac{\nu_{r}}{2}- \frac{\mu(p+q)}{2})\otimes(E_{r,p+r})_{k}
+(\frac{q-p}{2}-\frac{\nu_{r}}{2}+\frac{\mu (p+q)}{2})\otimes(E_{p+r,r})_{k}\right.\\
&&\quad 
-\frac{1}{2}\mathop{\sum_{l\neq k}}_{m_{r-1} < l\leq m_{r}}S_{lk}\gamma_{l}\gamma_{k}+
\sum_{t<r}\sum_{m_{t-1}<l\leq m_{t}}\frac{1}{2}(S_{kl}-S_{kl}\gamma_{k}\gamma_{l})\\
&&\quad
\left.-\sum_{r<t\leq p}\sum_{m_{t-1}<l\leq m_{t}}\frac{1}{2}(S_{kl}+S_{kl}\gamma_{k}\gamma_{l})
-\sum_{m_{p}<l\leq m_{p+1}}\frac{1}{2}(S_{lk}+S_{lk}\gamma_{l}\gamma_{k})\right)(\varpi_{s})\\
&=&\left((\frac{p-q}{2}-\frac{\nu_{r}}{2}- \frac{\mu(p+q)}{2})\otimes(E_{r,p+r})_{k}
+(\frac{q-p}{2}-\frac{\nu_{r}}{2}+\frac{\mu (p+q)}{2})\otimes(E_{p+r,r})_{k}\right.\\
&&\quad\left. 
-\frac{1}{2}\sum_{l\neq k}S_{lk}\gamma_{l}\gamma_{k}+
\sum_{t<r}\sum_{m_{t-1}<l\leq m_{t}}\frac{1}{2}S_{kl}
-\sum_{r<t\leq p}\sum_{m_{t-1}<l\leq m_{t}}\frac{1}{2}S_{kl}\right)(\varpi_{s}).\end{eqnarray*}

From the construction of $\varpi_{s}$, we have
\begin{eqnarray*}
\frac{1}{2}\sum_{k<l\leq m_{r}}S_{lk}-\frac{1}{2}\sum_{m_{r-1}<l<k}S_{lk}
=\frac{m_{r}-k}{2}-\frac{k-m_{r-1}-1}{2}
=\frac{m_{r}+m_{r-1}+1}{2}-k.
\end{eqnarray*}

Then
\begin{eqnarray*}
y_{k}(\varpi_{s})&=&\left((\frac{p-q}{2}-\frac{\nu_{r}}{2}-\frac{\mu (p+q)}{2})\otimes(E_{r,p+r})_{k}
+(\frac{q-p}{2}-\frac{\nu_{r}}{2}+\frac{\mu (p+q)}{2})\otimes(E_{p+r,r})_{k}\right.\\
&&\quad\left.+\frac{p-q-\mu(p+q)}{2}\gamma_{k}+\frac{m_{r}+m_{r-1}+1}{2}-k\right)(\varpi_{s}).
\end{eqnarray*}

Now let us write $\varpi_{s}$ as 
$$\varpi_{s}=\omega\otimes a\otimes (e_{r})_{k}\otimes b\otimes \hat{w}_{s}-\omega\otimes a\otimes (e_{r+p})_{k}\otimes b\otimes \hat{w}_{s}.$$

We have
\begin{eqnarray*}
&&\left((\frac{p-q}{2}-\frac{\nu_{r}}{2}-\frac{\mu (p+q)}{2})\otimes(E_{r,p+r})_{k}
+(\frac{q-p}{2}-\frac{\nu_{r}}{2}+\frac{\mu (p+q)}{2})\otimes(E_{p+r,r})_{k}\right)(\varpi_{s})\\
&=&-(\frac{p-q}{2}-\frac{\nu_{r}}{2}-\frac{\mu (p+q)}{2})\omega\otimes a\otimes(e_{r})\otimes b\otimes \hat{w}_{s}\\
&&\quad
+(\frac{q-p}{2}-\frac{\nu_{r}}{2}+\frac{\mu (p+q)}{2})\omega\otimes a\otimes (e_{r+p})\otimes b\otimes \hat{w}_{s},
\end{eqnarray*} and
$$\frac{p-q-\mu(p+q)}{2}\gamma_{k}(\varpi_{s})=\frac{p-q-\mu(p+q)}{2}\omega\otimes(a\otimes(e_{r})\otimes b\otimes \hat{w}_{s}+a\otimes(e_{p+r})\otimes b\otimes \hat{w}_{s}).$$

Then $$y_{k}(\varpi_{s})=(\frac{\nu_{r}}{2}+\frac{m_{r}+m_{r-1}}{2}-k+\frac{1}{2})(\varpi_{s}), \text{ for } m_{r-1}<k\leq m_{r}, r\leq p.$$

\subsubsection{Case 2:~$m_{r-1}<k\leq m_{r}$, for $r\leq p$ and $q=p$.} 
By a similar discussion, we have,
\begin{eqnarray*}
&&\tilde{y}_{k}(\varpi_{s})\\
&=&\left((-\frac{\nu_{r}}{2}-\mu p)\otimes(E_{r,p+r})_{k}
+(-\frac{\nu_{r}}{2}+\mu p)\otimes(E_{p+r,r})_{k}\right.\\
&&\quad 
+\mathop{\sum_{l\neq k}}_{m_{r-1} < l\leq m_{r}}\left(\frac{\gamma_{l}}{2}\otimes(E_{r,p+r})_{k}-\frac{\gamma_{l}}{2}\otimes(E_{p+r,r})_{k}\right)\\
&&\quad
+\sum_{r<t\leq p}\left(\sum_{m_{t-1}<l\leq m_{t}}(E_{p+r,p+t})_{l}\otimes (E_{p+t,r})_{k}+(E_{r,t})_{l}\otimes (E_{t,p+r})_{k}
\right)\\
&&\quad
\left.-\sum_{t<r}\left(\sum_{m_{t-1}<l\leq m_{t}}
(E_{p+r,p+t})_{l}\otimes (E_{t,p+r})_{k}+(E_{r,t})_{l}\otimes (E_{p+t,r})_{k}\right)\right)(\varpi_{s}).
\end{eqnarray*}

Thus by a similar discussion as in the $q>p$ case, we have
\begin{eqnarray*}
y_{k}(\varpi_{s})&=&\left((-\frac{\nu_{r}}{2}-\mu p)\otimes(E_{r,p+r})_{k}
+(-\frac{\nu_{r}}{2}+\mu p)\otimes(E_{p+r,r})_{k}\right.\\
&&\left.\quad-\mu p\gamma_{k}+\frac{m_{r}+m_{r-1}+1}{2}-k\right)(\varpi_{s})\\
&=&(\frac{\nu_{r}}{2}+\frac{m_{r}+m_{r-1}}{2}-k+\frac{1}{2})(\varpi_{s}), \text{ for } m_{r-1}<k\leq m_{r}, r\leq p, 
\end{eqnarray*}
which proved the theorem \ref{common-eigenvector} for the $q=p$ case.

\subsubsection{Case 3:~$k>m_{p}$.}

Now let $k>m_{p}$ and $q>p$. In this case, as an operator on $\varpi_{s}$, $\tilde{y}_{k}$ has the following simple form:
\begin{eqnarray*}
\tilde{y}_{k}
&=& -\sum_{k\neq l}
\sum_{1\leq i\leq p<j}(E_{p+j,p+i})_{l}\otimes (E_{i,p+j})_{k}\\
&=&-\sum_{r=1}^{p}\sum_{m_{r-1}<l\leq m_{r}}\sum_{j>p}
(E_{p+j,p+r})_{l}\otimes (E_{r,p+j})_{k}\\
&=&\sum_{l\leq m_{p}}
\frac{1}{2}(S_{kl}-S_{kl}\gamma_{k}\gamma_{l}).
\end{eqnarray*}

Then 
\begin{eqnarray*}
y_{k}(\varpi_{s})&=&\left(\tilde{y}_{k}+\frac{p-q-\mu (p+q)}{2}\gamma_{k}+\frac{1}{2}\sum_{l>k}S_{lk}-\frac{1}{2}\sum_{l<k}S_{lk}+\frac{1}{2}\sum_{l\neq k}S_{lk}\gamma_{l}\gamma_{k}\right)(\varpi_{s})\\
&=&\left(\sum_{l\leq m_{p}}
\frac{1}{2}(S_{kl}-S_{kl}\gamma_{k}\gamma_{l})-\frac{p-q-\mu (p+q)}{2}+\frac{1}{2}\sum_{l>k}S_{lk}-\frac{1}{2}\sum_{m_{p}<l<k}S_{lk}\right.\\
&&\quad\left. -\frac{1}{2}\sum_{l\leq m_{p}}S_{lk}
+\frac{1}{2}\sum_{l>k}S_{lk}+\frac{1}{2}\sum_{m_{p}<l<k}S_{lk}+\frac{1}{2}\sum_{l\leq m_{p}}S_{lk}\gamma_{l}\gamma_{k}\right)(\varpi_{s})\\
&=&\left(-\frac{p-q-\mu(p+q)}{2}+\sum_{l>k}S_{lk}\right)(\varpi_{s}).
\end{eqnarray*}

Notice the action of $\sum_{l>k}S_{lk}$ on $\varpi_{s}$ only affects  the component $\hat{w}_{s}$.
From the construction of $\hat{w}_{s}$, it is easy to see that 
$\sum_{l>k}S_{lk}(\hat{w}_{s})=\hat{\alpha}_{k-m_{p}+1,s}(\hat{w}_{s})$, where 
$\hat{\alpha}_{i,s}$ is defined in lemma \ref{eigenvector} and only depends on the partition $\xi^{\mu}$.

Thus in this case, we have 
$$y_{k}(\varpi_{s})=(-\frac{p-q-\mu(p+q)}{2}+\hat{\alpha}_{k-m_{p}+1,s})\varpi_{s}.$$

\subsection{Image of the Harish-Chandra modules}
We continue to use the setup in section \ref{sect-comm}.
Let $\mathcal{H}_{n}(\kappa_{1},\kappa_{2})$ be the
type $BC_{n}$ dAHA with parameters as in theorem \ref{aff}.

For $i=1,\ldots,p$, let $\mathcal{H}^{i}:=\mathcal{H}_{n_{i}^{\mu}}(\kappa_{1})$  be a type
$A_{n_{i}^{\mu}-1}$ dAHA generated by 
$\mathcal{S}_{n_{i}^{\mu}}$ and 
$y_{m_{i-1}+1}, \ldots, y_{m_{i}}$. 
Let $\mathcal{H}^{\xi}:=\mathcal{H}_{n_{\xi}^{\mu}}(\kappa_{1},\kappa_{2})$ be a type $BC_{n_{\xi}^{\mu}}$ dAHA generated by
$\mathcal{S}_{n_{\xi}^{\mu}}\ltimes \mathbb{Z}_{2}^{n_{\xi}^{\mu}}$ and 
$y_{m_{p}+1}, \ldots, y_{n}$. 
Then $\mathcal{H}:=\otimes_{i=1}^{p} \mathcal{H}^{i}\otimes \mathcal{H}^{\xi}$
is a subalgebra of $\mathcal{H}_{n}(\kappa_{1},\kappa_{2})$. 

Let $\mathcal{P}$ be the Specht module of $S^{n_{\xi}^{\mu}}$. 
Define a $\mathcal{H}$-module structure on $\mathcal{P}$ by
letting $\mathcal{S}_{n_{i}^{\mu}}$ act trivially,  
$\mathbb{Z}_{2}^{n_{\xi}^{\mu}}$ act 
by $-1$ and $y_{i}$ act on $\mathcal{P}$ by 
$y_{i}(\varpi_{s})=\lambda_{i,s}\varpi_{s}$ for $s=1,\ldots, d_{\xi^{\mu}}$. Here,
$\varpi_{s}$ and $\lambda_{i,s}$ are defined in theorem \ref{common-eigenvector}.

Define $$\tilde{\mathcal{P}}=\Ind_{\mathcal{H}}^{\mathcal{H}_{n}(\kappa_{1},\kappa_{2})}\mathcal{P}.$$
We have
\begin{theorem}{\label{main}}
The image of the Harish-Chandra module $H_{\pi\otimes\nu}$ under the functor $F_{n,p,\mu}$ is isomorphic to $\tilde{\mathcal{P}}$ as $\mathcal{H}_{n}(\kappa_{1},\kappa_{2})$-modules.
\end{theorem}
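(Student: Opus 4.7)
The plan is to realize $F_{n,p,\mu}(H_{\pi\otimes\nu})$ as an induced module with source $\mathcal{P}$ via Frobenius reciprocity, and then conclude by a dimension match.

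First, I would verify that the prescribed action really does make $\mathcal{P}$ into an $\mathcal{H}$-module. The relations $[y_i,y_j]=0$ are automatic because $\{\varpi_s\}$ is a simultaneous $y$-eigenbasis, and the commutations $[S,y_k]=0$, $[\gamma_j,y_k]=0$ for generators not touching index $k$ follow from the block structure. The nontrivial checks are the crossing relation $S_iy_i-y_{i+1}S_i=\kappa_1$ on $\mathcal{H}^\xi$, which reduces to the Jucys-Murphy combinatorics of Lemma \ref{eigenvector} once one identifies the eigenvalues $\hat{\alpha}_{k-m_p+1,s}$ with the classical $\hat L_i$-eigenvalues on $S^{\xi^\mu}$, together with $\gamma_n y_n+y_n\gamma_n=\kappa_2$, which follows from the sign action of $\mathbb{Z}_2^{n_\xi^\mu}$ combined with the formula for $\lambda_{n,s}$.

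Second, I would define $\phi:\mathcal{P}\to F_{n,p,\mu}(H_{\pi\otimes\nu})$ by sending each abstract basis vector to the concrete vector \eqref{eqn-eigenvector}. Proposition \ref{prop}(i)--(iii) shows that $\phi$ intertwines the actions of $\mathcal{S}_{n_i^\mu}$, $\mathcal{S}_{n_\xi^\mu}$, and $1\times\cdots\times 1\times\mathbb{Z}_2^{n_\xi^\mu}$, while Theorem \ref{common-eigenvector} shows it intertwines each $y_k$. Hence $\phi$ is an $\mathcal{H}$-module homomorphism, and by the adjunction between $\Ind_{\mathcal{H}}^{\mathcal{H}_n(\kappa_1,\kappa_2)}$ and restriction it lifts uniquely to a homomorphism $\tilde\phi:\tilde{\mathcal{P}}\to F_{n,p,\mu}(H_{\pi\otimes\nu})$ of $\mathcal{H}_n(\kappa_1,\kappa_2)$-modules. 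Surjectivity of $\tilde\phi$ is immediate from Proposition \ref{prop}(iv), since the $\varpi_s$ already generate the target under $\mathbb{C}\mathcal{W}_{BC_n}$ alone.

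Finally, I would use the PBW decomposition $\mathcal{H}_n(\kappa_1,\kappa_2)\cong\mathbb{C}\mathcal{W}_{BC_n}\otimes\mathbb{C}[y_1,\ldots,y_n]$ and the parallel factorization of $\mathcal{H}$ to compute
\[
\dim\tilde{\mathcal{P}}=[\mathcal{W}_{BC_n}:\mathcal{S}_{n_1^\mu}\times\cdots\times\mathcal{S}_{n_p^\mu}\times\mathcal{W}_{BC_{n_\xi^\mu}}]\cdot d_{\xi^\mu}=\frac{n!\prod_{i=1}^p 2^{n_i^\mu}}{\prod_{i=1}^p n_i^\mu!\prod_{k=1}^{|\xi^\mu|} h_k(\xi^\mu)},
\]
where the second equality uses the hook length formula for $d_{\xi^\mu}$. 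This agrees with $\dim F_{n,p,\mu}(H_{\pi\otimes\nu})$ from Theorem \ref{Condition}(ii), so the surjection $\tilde\phi$ must be an isomorphism. The main obstacle is Step 1: producing a consistent $\mathcal{H}$-module structure on $\mathcal{P}$ that is simultaneously compatible with the dAHA crossing relations on the $\mathcal{S}_{n_i^\mu}$-blocks (where the symmetric group acts trivially yet consecutive $y_k$'s must differ by $\kappa_1$) and with the eigenvalues $\lambda_{k,s}$ already dictated by Theorem \ref{common-eigenvector}; once this compatibility is settled, the remaining steps are essentially formal.
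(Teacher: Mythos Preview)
Your proposal is correct and follows essentially the same route as the paper's proof: build an $\mathcal{H}$-map $\mathcal{P}\to F_{n,p,\mu}(H_{\pi\otimes\nu})$ on the basis $\{\varpi_s\}$, extend to $\tilde{\mathcal{P}}$ by the universal property of induction, invoke Proposition~\ref{prop}(iv) for surjectivity, and finish with the dimension count from Theorem~\ref{Condition}(ii). The paper's own argument is terser---it simply declares the $\mathcal{H}$-module structure on $\mathcal{P}$ and the extension of $\theta$ without the verifications you outline in Step~1 and Step~2---so your write-up is if anything more complete, and the ``main obstacle'' you flag (compatibility of the trivial $\mathcal{S}_{n_i^\mu}$-action with the crossing relation, forced by the arithmetic progression of the $\lambda_{k,s}$ within each block) is exactly the point the paper leaves implicit.
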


\begin{proof}
Define a linear map
\begin{eqnarray*}
\theta : &\mathcal{P}&\to F_{n,p,\mu}(H_{\pi\otimes\nu}),\\
&\tilde{\varpi}_{s}&\mapsto \varpi_{s},
\end{eqnarray*}
and extend it to $\tilde{\mathcal{P}}$ as a $\mathcal{H}_{n}(\kappa_{1},\kappa_{2})$-module homomorphism. It is surjective from proposition \ref{prop}.

Now compute the dimension of $\tilde{\mathcal{P}}$.  Since 
$$|\tilde{\Gamma}|=2^{n_{\xi}^{\mu}}n_{\xi}^{\mu}!\prod_{i=1}^{p}n^{\mu}_{i}!,$$
we have 
$$|\mathcal{W}_{BC_{n}}/\tilde{\Gamma}|=\frac{2^{n}n!}{2^{n_{\xi}^{\mu}}n^{\mu}_{\xi}!\prod_{i=1}^{p}n^{\mu}_{i}}=\frac{n!\prod_{i=1}^{p}2^{n_{p}^{\mu}}}{n^{\mu}_{\xi}!\prod_{i=1}^{p}n^{\mu}_{i}}.$$

Notice that $\dim \mathcal{P}=\dfrac{n_{\xi}^{\mu}!}{\Pi h_{k}(\xi^{\mu})}$, thus 
$$\dim \tilde{\mathcal{P}}=\frac{n!\prod_{i=1}^{p}2^{n_{p}^{\mu}}}{\prod_{i=1}^{p}n^{\mu}_{i}\Pi h_{k}(\xi^{\mu})}.$$

By comparing the dimension, we can see that $\theta$ is an 
isomorphism which proves the theorem.
\end{proof}

\begin{corollary}
When $n=1$, $F_{n,p,\mu}(H_{\pi\otimes\nu})$ is the principal series module for $\mathcal{H}_{1}(\kappa_{1},\kappa_{2})$ with character $\lambda_{1,1}$.
\end{corollary}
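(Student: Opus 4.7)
The plan is to specialize Theorem \ref{main} to $n=1$ and then identify the induced module $\tilde{\mathcal{P}}$ explicitly with a standard principal series for the rank-one dAHA. By Theorem \ref{main}, $F_{1,p,\mu}(H_{\pi\otimes\nu})\cong \tilde{\mathcal{P}}=\Ind_{\mathcal{H}}^{\mathcal{H}_1(\kappa_1,\kappa_2)}\mathcal{P}$, so the whole task reduces to reading off what $\mathcal{H}$ and $\mathcal{P}$ become when $n=1$.

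First I would use the constraint $\sum_{i=1}^p n_i^{\mu}+n_\xi^{\mu}=n=1$ together with Theorem \ref{Condition}(i) (nonvanishing of the image). Since the $n_i^{\mu}$ and $n_\xi^{\mu}$ are non-negative integers summing to $1$, exactly one of them equals $1$; in the non-degenerate case this is some $n_r^{\mu}=1$ with $r\in\{1,\dots,p\}$, while all other $n_i^{\mu}$ and $n_\xi^{\mu}$ vanish. With these values: each $\mathcal{H}^{i}$ for $i\neq r$ is the type $A_{-1}$ dAHA (i.e.\ just $\mathbb{C}$); the factor $\mathcal{H}^{\xi}$ is the type $BC_0$ dAHA (also $\mathbb{C}$); and $\mathcal{H}^{r}$ is the type $A_0$ dAHA, whose Weyl group $\mathcal{S}_1$ is trivial and which is therefore generated freely by the single generator $y_1$. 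Consequently the subalgebra $\mathcal{H}=\bigotimes_{i=1}^p\mathcal{H}^{i}\otimes\mathcal{H}^{\xi}\subset\mathcal{H}_1(\kappa_1,\kappa_2)$ collapses to the polynomial subalgebra $\mathbb{C}[y_1]$.

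Next I would describe $\mathcal{P}$: since $n_\xi^{\mu}=0$, the Specht module $S^{\xi^{\mu}}=S^{\emptyset}$ is one-dimensional, so $\mathcal{P}=\mathbb{C}\tilde{\varpi}_1$ with $y_1$ acting by the scalar $\lambda_{1,1}$ (equal to $\nu_r/2$ by the $m_{r-1}=0$, $m_r=1$ case of Theorem \ref{common-eigenvector}). There is nothing further to check on $\mathcal{P}$: the trivial symmetric group parts act trivially, and the $\mathbb{Z}_2^{\,0}$ factor is empty. Hence
\begin{equation*}
\tilde{\mathcal{P}}\;=\;\Ind_{\mathbb{C}[y_1]}^{\mathcal{H}_1(\kappa_1,\kappa_2)}\mathbb{C}_{\lambda_{1,1}},
\end{equation*}
which is, by definition, the principal series module of the $BC_1$-type dAHA with character $\lambda_{1,1}$ (a two-dimensional module with basis $\{1,\gamma_1\}\otimes \tilde{\varpi}_1$, using $y_1\gamma_1=-\gamma_1 y_1+\kappa_2$ to normal-order). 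Combining with the isomorphism from Theorem \ref{main} gives the corollary.

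The step requiring the most care is the bookkeeping that identifies $\mathcal{H}$ with the full polynomial subalgebra $\mathbb{C}[y_1]$ of $\mathcal{H}_1(\kappa_1,\kappa_2)$: one must observe that the rank-zero type $A$ and type $BC$ factors contribute nothing, and that the single surviving rank-one type $A$ factor has trivial Weyl group, so no residual symmetric-group or sign-reflection structure remains to obstruct the identification of $\tilde{\mathcal{P}}$ with the standard induced (principal series) module.
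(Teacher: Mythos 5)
The paper gives no separate proof of this corollary: it is stated as an immediate specialization of Theorem \ref{main}, and your unpacking of that specialization is the intended route. Your bookkeeping in the case where some $n_r^{\mu}=1$ is correct: all other factors of $\mathcal{H}$ are rank-zero and contribute nothing, $\mathcal{H}$ collapses to $\mathbb{C}[y_1]$, $\mathcal{P}$ is the one-dimensional $\mathbb{C}[y_1]$-module with character $\lambda_{1,1}=\nu_r/2$, and $\tilde{\mathcal{P}}=\Ind_{\mathbb{C}[y_1]}^{\mathcal{H}_1(\kappa_1,\kappa_2)}\mathbb{C}_{\lambda_{1,1}}$ is the two-dimensional principal series module.

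There is, however, a gap: you dismiss as ``degenerate'' the alternative $n_{\xi}^{\mu}=1$ with all $n_i^{\mu}=0$, but this is a genuine case that the paper itself treats on equal footing (it is Case 2 of Section 5, where $\xi_{q-p}=-\mu p+\tau-1$). In that case your identification of $\mathcal{H}$ with $\mathbb{C}[y_1]$ fails: the surviving factor is $\mathcal{H}^{\xi}=\mathcal{H}_{1}(\kappa_1,\kappa_2)$, which is the \emph{whole} algebra, so $\tilde{\mathcal{P}}=\mathcal{P}$ is one-dimensional (consistent with Theorem \ref{Condition}, which gives $\dim F_{1,p,\mu}(H_{\pi\otimes\nu})=1$ here), with $\gamma_1$ acting by $-1$ and $y_1$ by $\lambda_{1,1}=-\frac{p-q-\mu(p+q)}{2}$. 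This is not the two-dimensional induced module $\Ind_{\mathbb{C}[y_1]}^{\mathcal{H}_1(\kappa_1,\kappa_2)}\mathbb{C}_{\lambda_{1,1}}$, so the conclusion of the corollary does not follow from your argument in this case; one must either restrict the statement to the case $n_r^{\mu}=1$, or argue separately that this one-dimensional module is what is meant by the principal series at this special character (e.g.\ as its unique irreducible quotient, which requires checking the compatibility of $\lambda_{1,1}$ with the relation $\gamma_1 y_1+y_1\gamma_1=\kappa_2$). At minimum you should name this case and say explicitly how it is handled rather than setting it aside.
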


\section{Infinitesimal characters}
From now on, we assume the rank of the dAHA $n=1$ and $q>p$. Then  the possible $M$-module $W$ with the form \eqref{W} has the following parameters:
\begin{enumerate}
\item[Case 1:] All $n_{i}=\mu(q-p)+2\tau$ and $\gamma_{j}=-\mu p+\tau$ except one $n_{k}=\mu(q-p)+2\tau-1$ for some $1\leq k\leq p$; 
\item[Case 2:] all $n_{i}=\mu(q-p)+2\tau$ and $\gamma_{j}=-\mu p+\tau$ except $\gamma_{q-p}=-\mu p+\tau-1$.
\end{enumerate}

We want to compute $y^{2}_{1}$ by using  the infinitesimal character of $\mathfrak{g}=\mathfrak{gl}_{N}$. Most of the computations in this section are done by using the software Mathematica 6.0.

\subsection{Casimir elements and $y_{1}^{2}$}
The Casimir elements are a family of elements in $Z(\mathfrak{g})\subset U(\mathfrak{g})$ which are defined by:
\begin{equation*}
C_{k}=\sum_{i_{1},\ldots,i_{k}}E_{i_{1},i_{2}}E_{i_{2},i_{3}}\cdots E_{i_{k},i_{1}}, \text{ for } k=1,\ldots, n. 
\end{equation*}
Define the action of $C_{k}$ on $F_{n,p,\mu}(H_{\pi\otimes \nu})$ by 
letting it act on the $0$-th component, i.e., on the $H_{\pi\otimes \nu}$ part.

\begin{proposition}{\label{ycc}}
Suppose $n=1$. As operators on $F_{n,p,\mu}(H_{\pi\otimes\nu})$, \begin{eqnarray}{\label{eqn-ycc}}
y_{1}^{2}&=&-\frac{1}{3}C_{3}+\frac{1}{2}(\frac{p+q}{3}+\mu(q-p)+2\tau)C_{2}+\frac{(p+q)^{2}}{12}-\frac{1}{3}\\\nonumber
&&\qquad+\frac{1}{4}(p-q-2\tau)^{2}\mu^{2}-\frac{1}{6}pq(p-q)(p+q)\mu(1-\mu^{2})\\\nonumber
&&\qquad+\frac{1}{6}(p+q)\tau(2-(p+q)\tau+3\mu\tau(p-q)-4\tau^{2}).
\end{eqnarray}
\end{proposition}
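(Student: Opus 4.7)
The plan is to reduce $y_1^2$ to an expression in $U(\mathfrak g)$ acting on $\mathcal M=H_{\pi\otimes\nu}$, plus a scalar, and then reorganize this as a combination of $C_2$ and $C_3$. Two ingredients drive the reduction: the dramatic simplification of $y_1$ when $n=1$, and the $(\mathfrak k_0,\mu)$-invariance that lets one convert a $\mathbb C^N$-action into an action on $\mathcal M$ modulo a scalar.

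\textbf{Step 1 (decoupling $\gamma_1$).} When $n=1$, every sum $\sum_{k>i}$, $\sum_{k<i}$, $\sum_{i\neq k}$ in the definition of $y_i$ in Theorem~\ref{aff} is empty, so $y_1 = \tilde y_1 + \tfrac{p-q-\mu N}{2}\gamma_1$. Since $\tilde y_1$ is built only from matrix units $E_{ij}$ with $i,j$ in opposite blocks of $\{1,\dots,p\}\sqcup\{p+1,\dots,N\}$, and $\gamma_1=J_1$ acts as $\pm 1$ on the two blocks, a direct check gives $\tilde y_1\gamma_1+\gamma_1\tilde y_1=0$. With $\gamma_1^2=1$ this yields
\[
y_1^2=\tilde y_1^2+\left(\tfrac{p-q-\mu N}{2}\right)^2,
\]
so the task reduces to computing $\tilde y_1^2$.

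\textbf{Step 2 (removing the $\mathbb C^N$-action via invariance).} Using $(E_{ji})_1(E_{lk})_1=\delta_{il}(E_{jk})_1$, the square expands as $\tilde y_1^2=\sum_{i,j,k} E_{ij}E_{ki}\otimes(E_{jk})_1$, with the index constraint that $j,k$ lie in the block opposite to $i$. In particular $j,k$ are in the same block, so $E_{jk}\in\mathfrak k$, and on $(\mathcal M\otimes\mathbb C^N)^{\mathfrak k_0,\mu}$ one may substitute
\[
(E_{jk})_1 \;=\; \mu\chi(E_{jk})+\tau\delta_{jk}-(E_{jk})^{(0)},
\]
an identity valid for every $X\in\mathfrak k$ because $\mathfrak k_0$ acts by $\mu\chi$, the center $\mathbb C\cdot I_N$ of $\mathfrak k$ acts by $\tau$ (see Section~\ref{dimofimage}), and $\chi(I_N)=0$. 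After this substitution $\tilde y_1^2$ is realized by a degree-$\le 3$ element of $U(\mathfrak g)$ (plus a scalar) acting on the $\mathcal M$-factor.

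\textbf{Step 3 (assembling $C_3$ and $C_2$).} The cubic piece $-\sum E_{ij}E_{ki}E_{jk}$ is reordered via $[E_{ki},E_{jk}]=-E_{ji}$ (the $\delta_{ij}E_{kk}$ contribution vanishes since $i,j$ are in opposite blocks), giving
\[
-\sum E_{ij}E_{jk}E_{ki}+\sum E_{ij}E_{ji},
\]
over the same constrained range. The first summand is the restriction of $C_3=\sum_{a,b,c}E_{ab}E_{bc}E_{ca}$ to triples whose block pattern is $(1,2,2)$ or $(2,1,1)$; writing $C_3$ as that piece plus its five-pattern complement, and repeatedly applying the substitution of Step~2 to any factor $E_{ab}$ with $a,b$ in a single block (and reordering by commutators when needed), reduces everything to $C_3$, $C_2$, and scalars. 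The quadratic remainder $\sum E_{ij}E_{ji}$, combined with the $\mu\chi+\tau\delta_{jk}$ scalar piece generated in Step~2, assembles into $C_2$ plus further scalars by the same procedure.

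\textbf{Step 4 (bookkeeping).} What remains is pure combinatorics: one must evaluate multiplicities of sums such as $\sum_{i\le p}\chi(E_{ii})=pq$, $\sum_{i\le p}\chi(E_{ii})^2=pq^2$, $\sum_{i>p}\chi(E_{ii})^2=p^2q$, and their cubic analogues, and combine them with the prefactors $\mu, \tau, p-q, p+q$ to recover exactly the coefficients of \eqref{eqn-ycc}. This is the step the author delegates to Mathematica; the conceptual content is entirely in Steps 1--3, and the main obstacle to a human proof is precisely this bookkeeping.
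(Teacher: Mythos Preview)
Your proposal is correct and follows essentially the same route as the paper: reduce $y_1^2$ to $\tilde y_1^2+\bigl(\tfrac{p-q-\mu N}{2}\bigr)^2$ via the anticommutation $\tilde y_1\gamma_1+\gamma_1\tilde y_1=0$, expand $\tilde y_1^2=\sum_{i|jk}E_{ij}E_{ki}\otimes(E_{jk})_1$, use $(\mathfrak k_0,\mu)$-invariance to trade $(E_{jk})_1$ for an action on $\mathcal M$ plus a scalar, reorder $E_{ki}E_{jk}$ by the commutator, and then match against $C_2,C_3$. The only cosmetic difference is that the paper computes $\tilde y_1^2$, $C_2$, and $C_3$ separately as operators on $F_{1,p,\mu}(H_{\pi\otimes\nu})$ (keeping $\tfrac{1\pm\gamma}{2}$ around as projectors) and then observes that the stated linear combination cancels the non-central parts, whereas you run the same identities in the other direction; also, your ``five-pattern complement'' in Step~3 should be six (there are $2^3=8$ block patterns for $(a,b,c)$ in $C_3$, and $(1,2,2),(2,1,1)$ account for two of them).
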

\begin{proof}
The proof is based on direct computations.
At first, as operators on $F_{n,p,\mu}(H_{\pi\otimes\nu})$, we have 
\begin{eqnarray*}
&&\tilde{y}_{1}^{2}\\
&=&\sum_{i|kj}E_{i,j}E_{k,i}\otimes E_{j,k}\\
&=&-\sum_{i|kj}E_{i,j}E_{k,i}E_{j,k}\otimes 1-(\mu p-\tau)\sum_{i\leq p<j}E_{i,j}E_{j,i}\otimes 1+(\mu q+\tau)\sum_{j\leq p<i}E_{i,j}E_{j,i}\otimes 1\\
&=&-\sum_{i|k,j}E_{i,j}(E_{j,k}E_{k,i}+E_{k,k}\delta_{i,j}-E_{j,i})\otimes 1-(\mu p-\tau)\sum_{i\leq p<j}E_{i,j}E_{j,i}\otimes 1
\\&&\qquad+(\mu q+\tau)\sum_{j\leq p<i}E_{i,j}E_{j,i}\otimes 1\\
&=&-\sum_{i|kj}E_{i,j}E_{j,k}E_{k,i}\otimes 1+(p+q+\mu(q-p)+2\tau)\sum_{i\leq p<j}E_{i,j}E_{j,i}\otimes 1\\
&&\quad-p(p+\mu q+\tau)\otimes\frac{1-\gamma}{2}+(p+\mu q+\tau)q\otimes\frac{1+\gamma}{2}-\mu pq(p+q)(p+\mu q+\tau),
\end{eqnarray*}
and 
\begin{eqnarray*}
C_{2}&=&\sum_{i,j}E_{i,j}E_{j,i}\\
&=&\sum_{i\leq p<j}E_{i,j}E_{j,i}+\sum_{j\leq p<i}E_{i,j}E_{j,i}+\sum_{i,j>p}E_{i,j}E_{j,i}+\sum_{i,j\leq p}E_{i,j}E_{j,i}\\
&=&\sum_{i\leq p<j}E_{i,j}E_{j,i}+\sum_{j\leq p<i}(E_{j,i}E_{i,j}+E_{i,i}-E_{j,j})+\sum_{i,j>p}E_{i,j}E_{j,i}+\sum_{i,j\leq p}E_{i,j}E_{j,i}\\
&=&2\sum_{i\leq p<j}E_{i,j}E_{j,i}+(q-p+2\mu p-2\tau)\otimes\frac{1-\gamma}{2}+(p+q-2\mu q-2\tau)\otimes \frac{1+\gamma}{2}\\
&&\qquad+q(\mu p-\tau)^{2}+p(\mu q+\tau)^{2}-\mu pq(p+q).
\end{eqnarray*}

For $C_{3}$, we have:
\begin{eqnarray*}
C_{3}
&=&\sum_{i,j,k}E_{i,j}E_{j,k}E_{k,i}\\
&=&3\sum_{i|jk}E_{i,j}E_{j,k}E_{k,i}+\sum_{ijk}E_{i,j}E_{j,k}E_{k,i}-2\sum_{i|jk}E_{i,j}E_{j,i}+2\sum_{i|jk}E_{k,j}E_{j,k}\\
&&\qquad-\sum_{i|j}E_{i,i}E_{j,j}-\sum_{i|j}E_{j,j}E_{i,i}.
\end{eqnarray*}
As operators on $F_{n,p,\mu}(H_{\pi\otimes\nu})$,
\begin{eqnarray*}
&&\sum_{ijk}E_{i,j}E_{j,k}E_{k,i}\\
&=&(-3(\mu q+\tau)^{2}+3p(\mu q+\tau))\frac{1+\gamma}{2}
+(-3(\mu p-\tau)^{2}-3q(\mu p-\tau))\frac{1-\gamma}{2}\\
&&\quad
+p(\mu q+\tau)^{3}-q(\mu p-\tau)^{3}-1,
\end{eqnarray*}
\begin{eqnarray*}
&&-2\sum_{i|jk}E_{i,j}E_{j,i}+2\sum_{i|jk}E_{k,j}E_{j,k}\\
&=&-2(p+q)\sum_{i\leq p<j}E_{i,j}E_{j,i}
+(2p^{2}+2pq+4p(\mu p-\tau))\frac{1-\gamma}{2}
\\&&\quad-4q(\mu q+\tau)\frac{1+\gamma}{2}
+2pq((\mu p-\tau)^{2}+(\mu q+\tau)^{2})+2\mu pq(p^{2}+pq),
\end{eqnarray*}
and 
\begin{eqnarray*}
&&-\sum_{i|j}E_{i,i}E_{j,j}-\sum_{i|j}E_{j,j}E_{i,i}\\
&=&2p(\mu q+\tau)(1-\gamma)-2q(\mu p-\tau)(1+\gamma)
+4pq(\mu p-\tau)(\mu q+\tau).
\end{eqnarray*}

Thus we have
\begin{eqnarray*}
&&C_{3}\\
&=&3\sum_{i|jk}E_{i,j}E_{j,k}E_{k,i}-1-2(p+q)\sum_{i\leq p<j}E_{i,j}E_{j,i}\\
&&\quad+(-3(\mu p-\tau)^{2}-3q(\mu p-\tau)+2p^{2}+2pq+4\mu p(p+q))\frac{1-\gamma}{2}\\
&&\quad+(-3(\mu q+\tau)^{2}+3p(\mu q+\tau)-4\mu q(p+q))
\frac{1+\gamma}{2}+2pq((\mu p-\tau)^{2}+(\mu q+\tau)^{2})\\
&&\quad 
+2\mu pq(p^{2}+pq)+p(\mu q+\tau)^{3}
-q(\mu p-\tau)^{3}+4pq(\mu p-\tau)(\mu q+\tau).
\end{eqnarray*}

Notice that
$$y_{1}^{2}=(\tilde{y}_{1}+\frac{p-q-\mu(p+q)}{2}\gamma)^{2}=\tilde{y}_{1}^{2}+\frac{(p-q-\mu(p+q))^{2}}{4}.$$
So we have \eqref{eqn-ycc}.

\end{proof}

\begin{remark}\label{yccpequq}
Proposition \ref{ycc} also holds when $p=q$. In that case, we have a simpler 
formula:
\begin{eqnarray*}
y_{1}^{2}&=&-\frac{1}{3}C_{3}+(\frac{p}{3}+\tau)C_{2}+\frac{p^{2}}{3}-\frac{1}{3}+\tau^{2}+\frac{2}{3}p\tau(1-p\tau-2\tau^{2}).
\end{eqnarray*}
\end{remark}

\subsection{Infinitesimal characters for principal series modules}
Let $H_{\pi\otimes \nu}$ be the principal series module associated to the 
irreducible representation $(\pi, W)$ of $M$ and $\nu\in\mathfrak{a}^{*}_{\mathbb{C}}$. Let $\mathfrak{m}$ be the complexification of the Lie algebra of $M$, and $\mathfrak{t}^{s}$ be its Cartan subalgebra.
Let $\Delta^{+}(\mathfrak{m},\mathfrak{t}^{s})$ be the set of positive roots with respect to $(\mathfrak{m},\mathfrak{t}^{s})$.
Let $$\rho(\mathfrak{m})=\frac{1}{2}\sum_{\alpha\in \Delta^{+}(\mathfrak{m},\mathfrak{t}^{s})}\alpha.$$
Suppose $(\pi, W)$ has a highest weight vector with highest weight
$\lambda_{0}\in (\mathfrak{t}^{s})^{*}$. 

\begin{lemma}[See \cite{Vog}]
The infinitesimal character of $H_{\pi\otimes\nu}$ is 
$(\lambda,\nu)$, where
$\lambda=\lambda_{0}+\rho(\mathfrak{m})$. 
Moreover, for any $z\in Z(\mathfrak{g})$, $z$ acts on $H_{\pi\otimes\nu}$ by
$$(\lambda, \nu)(\xi(z)),$$
where $\xi$ is the Harish-Chandra map for $G$.
\end{lemma}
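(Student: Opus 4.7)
The statement is essentially classical and appears in Vogan's book, but here is how I would reconstruct the argument. The overall strategy is: (i) exhibit a cyclic vector in $H_{\pi\otimes\nu}$ on which the action of $Z(\mathfrak{g})$ can be computed explicitly, and (ii) match the resulting scalar with the value of $\xi(z)$ at the parameter $(\lambda,\nu)$, using the PBW decomposition relative to a $\theta$-stable Cartan.

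First, I would fix a $\theta$-stable Cartan subalgebra of $\mathfrak{g}$ of the form $\mathfrak{h}=\mathfrak{t}^{s}\oplus\mathfrak{a}$ with $\mathfrak{t}^{s}\subset\mathfrak{m}$, and a compatible system of positive roots so that the positive roots split into those whose restriction to $\mathfrak{a}$ is trivial (giving $\Delta^{+}(\mathfrak{m},\mathfrak{t}^{s})$) and those whose restriction lies in $\Delta^{\mathrm{res}}_{+}$ (giving the root spaces making up $\mathfrak{n}_{+}$). This produces a triangular decomposition $\mathfrak{g}=\mathfrak{n}_{-}\oplus\mathfrak{m}\oplus\mathfrak{a}\oplus\mathfrak{n}_{+}$ refining the Iwasawa decomposition.

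Next, pick a highest weight vector $w_{0}\in W$ of weight $\lambda_{0}$ for $(\mathfrak{m},\mathfrak{t}^{s})$, and choose $f\in H_{\pi\otimes\nu}$ supported in a small neighborhood of $e$ so that, under the Iwasawa factorization $g=kan\in KAN$, the value $f(e)=w_{0}$ is reproduced in a ``highest weight'' fashion. Because of the transformation law $f(gman)=a^{-(\nu+\rho)}\pi(m^{-1})f(g)$, the derived action of $X\in\mathfrak{n}_{+}$ on this $f$ at the identity vanishes, the action of $X\in\mathfrak{a}$ reproduces $-(\nu+\rho)(X)$ (so contributes $\nu$ after the standard $\rho$-shift absorbed by the Harish-Chandra map), and the action of $X\in\mathfrak{t}^{s}$ reproduces the $\mathfrak{m}$-highest weight $\lambda_{0}$. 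For any $z\in Z(\mathfrak{g})$, write $z$ in PBW form relative to the ordering $(\mathfrak{n}_{-},\mathfrak{t}^{s},\mathfrak{a},\mathfrak{n}_{+})$; every summand involving a strictly positive factor of $\mathfrak{n}_{+}$ annihilates $f$ at $e$, and every summand with a strictly negative factor of $\mathfrak{n}_{-}$ vanishes because of $z\in Z(\mathfrak{g})$ (it must pair with a positive factor to preserve the weight, and those are already used up). What survives lies in $U(\mathfrak{h})$ and, by definition of the Harish-Chandra homomorphism, equals $\xi(z)$ evaluated at the appropriate weight.

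The main obstacle, and the point where care is needed, is the bookkeeping of the two $\rho$-shifts: the one built into $\xi$ to make it $W(\mathfrak{g},\mathfrak{h})$-equivariant, and the one $\rho$ for $\Delta^{\mathrm{res}}_{+}$ that appears in the very definition of $H_{\pi\otimes\nu}$. The net effect is precisely to replace $\lambda_{0}$ by $\lambda_{0}+\rho(\mathfrak{m})$ on the $\mathfrak{t}^{s}$-side while leaving the $\mathfrak{a}$-component equal to $\nu$. Combining this with the fact that $H_{\pi\otimes\nu}$ admits a central character (because every composition factor is irreducible admissible by Proposition \ref{psmod}, and they all share the same central character since the computation above does not depend on the chosen composition factor) yields the stated formula $(\lambda,\nu)(\xi(z))$ with $\lambda=\lambda_{0}+\rho(\mathfrak{m})$.
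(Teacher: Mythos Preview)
The paper does not give its own proof of this lemma; it is stated with a reference to Vogan's book \cite{Vog} and used as a black box in the subsequent computations. So there is no in-paper argument to compare against.

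Your outline is the standard one and is essentially correct. The one step that needs tightening is the claim that PBW summands carrying an $\mathfrak{n}_{-}$ factor vanish ``because of $z\in Z(\mathfrak{g})$ (it must pair with a positive factor to preserve the weight, and those are already used up).'' Nothing is ``used up'': the correct reason is simply that $z$ has $\mathfrak{h}$-weight zero, so every PBW monomial appearing in $z$ (for the full triangular decomposition with Cartan $\mathfrak{h}=\mathfrak{t}^{s}\oplus\mathfrak{a}$) has weight zero; a monomial with no positive-root factor on the right therefore has no negative-root factor either and already lies in $U(\mathfrak{h})$. Monomials that do carry a positive-root factor on the right kill your test vector at $e$ --- the restricted $\mathfrak{n}_{+}$ part via the $N$-transformation law, the $\mathfrak{m}_{+}$ part because $w_{0}$ is an $\mathfrak{m}$-highest weight vector. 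With that correction the argument goes through, and your remarks on matching the two $\rho$-shifts (the one built into $\xi$ and the $\rho$ in the definition of $H_{\pi\otimes\nu}$) are indeed the substantive content of the lemma.
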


Choose the Cartan subalgebra $\mathfrak{t}$ of $\mathfrak{g}$ to be the subalgebra consisting of all diagonal matrices. Let $\varphi: Z(\mathfrak{g})\to U(\mathfrak{t})$ be the restriction map.
Then we have
\begin{eqnarray*}
\varphi(C_{2})&=&\sum_{i=1}^{p+q}E_{ii}^{2}+\sum_{i=1}^{p+q}(p+q-2i+1)E_{ii};\\
\varphi(C_{3})&=&\sum_{i=1}^{p+q}E_{ii}^{3}+\sum_{i=1}^{p+q}(2(p+q)-3i+1)E_{ii}^{2}+\sum_{i=1}^{p+q}(p+q-i)(p+q-2i+1)E_{ii}\\
&&\qquad+\sum_{i<j}(-p-q+2i-1)E_{jj}-\sum_{i<j}E_{ii}E_{jj}.
\end{eqnarray*}
 
Let 
$$B=\frac{1}{\sqrt{2}}\left(\begin{array}{ccc}-I_p & I_p &  \\I_p & I_p &  \\ &  &\sqrt{2}I_{q-p}\end{array}\right).$$
Then we have 
$$B^{2}=I_{p+q},\text{ and} \qquad B\left(\begin{array}{ccc}h & a &  \\a & h &  \\ &  & I_{q-p}\end{array}\right)B=\left(\begin{array}{ccc}-a+h &  &  \\ & a+h &  \\ &  & b\end{array}\right),$$
where $a=\diag(a_{1},\ldots,a_{p}), h=\diag(h_{1},\ldots,h_{p}), b=\diag(b_{1},\ldots, b_{q-p})$. 

Then $B$ conjugates the Cartan subalgebra $\mathfrak{t}$ of $\mathfrak{g}$
to another Cartan subalgebra $\mathfrak{a}+\mathfrak{t}^{s}$.
Since the Harish-Chandra map $\xi$ corresponding to the Cartan subalgebra $\mathfrak{t}$ is 
$$\xi: E_{ii}\mapsto E_{ii}-\frac{p+q-2i+1}{2},$$
then the action of $C_{i}$ on $H_{\pi\otimes\nu}$ is the multiplication by
$$c_{i}=(\lambda, \nu)(\xi(B\varphi(C_{i})B)).$$

Now consider the special case.
\vspace{.5cm}
\paragraph{\bf Case 1}

\begin{eqnarray*}
\lambda&=&(\mu(q-p)+2\tau,\ldots,\mu(q-p)+2\tau,\mu(q-p)+2\tau-1,\mu(q-p)+2\tau,\ldots,\\&&\qquad\mu(q-p)+2\tau,\frac{q-p-1}{2}-\mu p+\tau,\ldots, -\frac{q-p-1}{2}-\mu p+\tau),\\
\nu&=&(\nu_{1},\ldots, \nu_{p}),\text{ where $\mu(q-p)+2\tau-1$ appears in the $k$-th position}.
\end{eqnarray*}
We can find that 
\begin{eqnarray*}
c_{2} &=& \frac{1}{2}\sum_{i=1}^{p}\nu_{i}^{2}-\frac{p^{3}}{6}-\frac{pq^{2}}{2}+\frac{p}{6}+\frac{1}{2}+\mu (p-q)+\frac{-p^{3}\mu^{2}+pq^{2}\mu^{2}}{2}-2\tau+\tau^{2}(p+q);\\
c_{3} &=& \frac{p+q}{4}\sum_{i=1}^{p}\nu_{i}^{2}-\frac{3\nu_{k}^{2}}{4}-\frac{pq^{3}}{4}-\frac{p^{2}q^{2}}{4}+\frac{q^{2}}{4}-\frac{p^{3}q}{12}+\frac{7pq}{12}+\frac{q}{4}-\frac{p^{4}}{12}+\frac{p^{2}}{3}+\frac{p}{4}-1\\
&&\quad -\frac{3\mu(p-q)}{4}\sum_{i=1}^{p}\nu_{i}^{2}+\frac{1}{4}\mu^{3}pq^{3}-\frac{3}{4}\mu^{3}p^{2}q^{2}-\frac{1}{4}\mu^{3}p^{3}q+\frac{3}{4}\mu^{3}p^{4}+\frac{1}{4}\mu^{2}pq^{3}\\
&&\quad+\frac{1}{4}\mu^{2}p^{2}q^{2}-\frac{3}{4}\mu^{2}q^{2}-\frac{1}{4}\mu^{2}p^{3}q+\frac{3}{2}\mu^{2}pq-\frac{1}{4}\mu^{2}p^{4}-\frac{3}{4}\mu^{2}p^{2}-\frac{1}{4}\mu pq^{3}+\frac{3}{4}\mu p^{2}q^{2}\\
&&\quad-\frac{1}{2}\mu q^{2}-\frac{3}{4}\mu p^{3}q+\frac{1}{4}\mu pq+\frac{3}{4}\mu q+\frac{1}{4}\mu p^{4}+\frac{1}{4}\mu p^{2}-\frac{3}{4}\mu p+\frac{3\tau}{2}\sum_{i=1}^{p}\nu_{i}^{2}+\tau^{3}(p+q)\\
&&\quad-3\tau^{2}+\frac{3}{2}\mu^{2}\tau pq^{2}-\frac{3}{2}\mu^{2}\tau p^{3}-3\mu\tau q+3\mu\tau p-\frac{3}{2}\tau pq^{2}-\frac{1}{2}\tau p^{3}+\frac{1}{2}\tau p+\frac{3}{2}\tau.
\end{eqnarray*}

Then from proposition \ref{ycc}, we have
\begin{eqnarray*}
y_{1}^{2}
=\frac{\nu_{k}^{2}}{4}=\lambda_{1,1}^{2},
\end{eqnarray*}
which is compatible with the eigenvalue we found in theorem \ref{common-eigenvector}.

\begin{remark}
When $q=p$, from remark \ref{yccpequq}, we know that it is similar
to case 1. In that case, we have
\begin{eqnarray*}
\lambda&=&(2\tau,\ldots,2\tau,2\tau-1,2\tau,\ldots,2\tau),\\
\nu&=&(\nu_{1},\ldots, \nu_{p}), \text{ where $-1$ appears in the $k$-th position.}
\end{eqnarray*}
Then $c_{2}$ and $c_{3}$ can be obtained by letting $p=q$ in the above formulas. We get similar results which are compatible to theorem \ref{common-eigenvector}.
\end{remark}

\vspace{.5cm}
\paragraph{\bf Case 2}
\begin{eqnarray*}
\lambda&=&(\mu(q-p)+2\tau,\ldots,\mu(q-p)+2\tau,\frac{q-p-1}{2}-\mu p+\tau,\ldots,-\frac{q-p-1}{2}-\mu p+\tau-1),\\
\nu&=&(\nu_{1},\ldots, \nu_{p}).
\end{eqnarray*}

In this case, we can find that 
\begin{eqnarray*}
c_{2} &=& \frac{1}{2}\sum_{i=1}^{p}\nu_{i}^{2}-\frac{pq^{2}}{2}+q-\frac{p^{3}}{6}-\frac{5p}{6}+\frac{1}{2}\mu^{2}pq^{2}-\frac{1}{2}\mu^{2}p^{3}+2\mu p-2\tau+\tau^{2}(p+q);\\
c_{3} 
&=& \frac{p+q}{4}\sum_{i=1}^{p}\nu_{i}^{2}-\frac{pq^{3}}{4}-\frac{p^{2}q^{2}}{4}-\frac{p^{3}q}{12}+\frac{25pq}{12}-\frac{p^{4}}{12}-\frac{11p^{2}}{12}-1\\
&&\quad +\frac{3\mu(q-p)}{4}\sum_{i=1}^{p}\nu_{i}^{2}+\frac{1}{4}\mu^{3}pq^{3}-\frac{3}{4}\mu^{3}p^{2}q^{2}-\frac{1}{4}\mu^{3}p^{3}q+\frac{3}{4}\mu^{3}p^{4}+\frac{1}{4}\mu^{2}pq^{3}\\
&&\quad+\frac{1}{4}\mu^{2}p^{2}q^{2}-\frac{1}{4}\mu^{2}p^{3}q-\frac{1}{4}\mu^{2}p^{4}-3\mu^{2}p^{2}-\frac{1}{4}\mu pq^{3}+\frac{3}{4}\mu p^{2}q^{2}-\frac{3}{4}\mu p^{3}q\\
&&\quad-\frac{7}{4}\mu pq+\frac{1}{4}\mu p^{4}+\frac{15}{4}\mu p^{2}-\frac{5}{2}\tau p-\frac{1}{2}\tau p^{3}+3\tau q-\frac{3}{2}\tau pq^{2}+6\mu\tau p\\
&&\quad-\frac{3}{2}\mu^{2}\tau p^{3}+\frac{3}{2}\mu^{2}\tau pq^{2}-3\tau^{2}+\tau^{3}(p+q).
\end{eqnarray*}

Then from proposition \ref{ycc}, we have that $y_{1}^{2}$ acts by
\begin{eqnarray*}
y_{1}^{2}
=(\frac{q-p+\mu(p+q)}{2})^{2}=\lambda_{1,1}^{2},
\end{eqnarray*}
which is also compatible with the eigenvalue we found in theorem \ref{common-eigenvector}.

\section*{acknowledgments}
{The author thanks Pavel Etingof for giving this problem and many useful discussions. The author thanks David Vogan for many explainations about the principal series representations. The author also thanks Ju-Lee Kim, Jun Yu and Ting Xue for many useful discussions. His work was partially supported by the NSF grant DMS-0504847.}


\end{document}